\newcommand{\N}{\mathbb{N}}
\newcommand{\al}{\alpha}
\newcommand{\be}{\beta}
\newcommand{\ep}{\epsilon}
\newcommand{\si}{\sigma}
\newcommand{\E}{\emptyset}
\newcommand{\ov}{\overline}
\newcommand{\se}{\subseteq}
\newcommand{\ls}{\langle}
\newcommand{\rs}{\rangle}
\newcommand{\uhr}{\upharpoonright}
\newcommand{\Lf}{\underleftarrow{\lim}\,{\Bold f}}
\newcommand{\Lg}{\underleftarrow{\lim}\,{\Bold g}}
\renewcommand{\L}{\underleftarrow{\lim}}
\def\Int{\mathop\mathrm{Int}}
\def\Per{\mathop\mathrm{Per}}
\def\Bold{\boldsymbol}
\newtheorem{presentment}{}
\theoremstyle{plain}
\newtheorem{theorem}[presentment]{Theorem}
\newtheorem{proposition}[presentment]{Proposition}
\newtheorem{lemma}[presentment]{Lemma}
\newtheorem{corollary}[presentment]{Corollary}
\theoremstyle{definition}
\newtheorem{definition}[presentment]{Definition}
\newtheorem{example}[presentment]{Example}
\theoremstyle{remark}
\title{A characterization of a map whose inverse limit is an arc}
\date{\today}
\author{Sina Greenwood}
\thanks{Sina Greenwood is supported by the Marsden Fund Council from Government funding, administered
	 by the Royal Society of New Zealand.}
\address{University of Auckland, Private Bag 92019, Auckland, New Zealand}
\email{sina@math.auckland.ac.nz}
\author{Sonja \v Stimac}
\thanks{Sonja \v Stimac is supported in part by the Croatian Science Foundation grant IP-2018-01-7491.}
\address{Department of Mathematics, Faculty of Science, University of Zagreb,
	Bijeni\v cka 30, 10\,000 Zagreb, Croatia
}
\email{sonja@math.hr}
\subjclass[2010]{54C05, 37B45, 54F15, 37E05, 54F65} 
\keywords{Arc, inverse limit.}
\begin{document}

\begin{abstract}
For a continuous function $f:[0,1] \to [0,1]$ we define a splitting sequence admitted by $f$ 
and show that the inverse limit of $f$ is an arc if and only if $f$ does not admit a splitting 
sequence.

\end{abstract}

\maketitle

\baselineskip=18pt


\section{introduction}

In this paper we solve a more than 50 year old open problem about a characterisation of 
a single bonding map on an interval whose inverse limit is an arc. Although at 
first glance the problem seems purely topological, it is also important in  dynamical systems 
since, by \cite{BM1}, every inverse limit space of an interval map can be realised as a global 
attractor for a homeomorphism of the plane. Therefore, our result sheds light on homeomorphisms 
of the plane whose attractors are arcs. In addition, on our way to proving the main result, we 
give dynamical properties, interesting in their own right, of a map on an interval whose 
inverse limit is an arc.

In 1968 Rogers \cite{R} considered the class of single bonding maps on $[0, 1]$ that are nowhere 
strictly monotone and showed that the inverse limit of such a function can be an arc. In the same 
paper Rogers asked a very natural question: \emph{what kind of maps will yield an arc, or more 
specifically, what kind of single bonding map will yield an arc}?

The question turned out to be very hard and has been studied by a number of authors. In 1995 Block 
and Schumann \cite{BS} characterised a unimodal map whose inverse limit is an arc. They showed that 
if $f$ is a unimodal map then its inverse limit is an arc if and only if either $f$ has more than 
one fixed point and no points of other periods, or $f$ has a single fixed point, a period 2 point, 
and no points of other periods. They also gave an example which shows that their characterisation 
for the unimodal maps cannot be extended to piecewise monotone maps. In addition they proved that 
if the inverse limit of a continuous map $f$ on the interval is an arc, then all periodic points of 
$f$ are either fixed points or have period two. 

In 2004 Mo, Shi, Zeng and Mai \cite{MSZM} considered piecewise monotone functions of type N on 
$[0, 1]$ and gave a characterisation of a single type N bonding map whose inverse limit is an arc.

Very recently (2020) Anu\v si\'c and \v Cin\v c \cite{AC} obtained a characterisation of a piecewise 
monotone map whose inverse limit is an arc.

We introduce the very simple notion of a tight sequence (Definition \ref{def-tight}) and study 
a subclass of tight sequences that we call splitting sequences (Definition \ref{def-split}).
We prove that the inverse limit of a continuous surjective function $f$ on an interval is an 
arc if and only if $f$ does not admit a splitting sequence (Theorem \ref{thm-main}). We also 
prove that $f$ admits a splitting sequence if there are two disjoint intervals whose images 
coincide and one of them, $A$, has a subinterval $D \subset A$ such that $f^k(D) = A$ for some 
positive integer $k$ (Lemma \ref{lem-A-B}). This criterion is easy to check for a large class of 
continuous functions (especially if $k$ is small). Additionally, we show that if $f$ has a 
periodic point of period greater than two, then $f$ has a spliting sequence 
(Lemma \ref{lem-2cyclesonly}). This, together with our main theorem, implies the above mentioned 
result from \cite{BS} about a continuous map whose inverse limit is an arc (that  all of its periodic 
points are either fixed points or have period two). 

As shown in \cite{BS}, an inverse limit may not be an arc even if its periodic points have period 
no greater than two. There are maps that have only fixed points, but yield complex inverse limit spaces. 
As we show in this paper, the reason is a splitting sequence. In the Block - Schumann example a 
splitting sequence is easily recognized using the criterion from Lemma \ref{lem-A-B}, as we show in 
Example \ref{ex-B-S}.

The other very interesting example is the Henderson map \cite{H}. It has only two fixed points and 
no points of other periods, but its inverse limit space is the pseudo-arc. The Henderson map is not 
piecewise monotone, so the criterion from \cite{AC} does not work for it. But the existence of a 
splitting sequence for the Henderson map is not hard to prove, as we show in Example \ref{HM}.

On our way towards the main result we also prove that a continuous function $f$ which has at least 
two different periodic orbits of period two, and has an arc as its inverse limit, also has the 
following very interesting property: If $\{s, t\}$ and $\{u, v\}$ are two 2-cycles with $s < t$ and 
$u < v$, then $s < u$ implies $v < t$ (Lemma \ref{2-cycle-order}). Moreover, $f$ has exactly one 
fixed point (Lemma \ref{lem-claim3}). 

The paper is organized as follows: In Section \ref{prelim} we give definitions and define notation 
required in the sequel. In Section \ref{sec-split} we define tight sequences, introduce splitting 
sequences and discuss properties of functions on an interval that do not admit a splitting sequence, 
and which are the base for the proof of our main theorem. In Section \ref{sec-arcs} we prove our 
main theorem. 


\section{Preliminaries}\label{prelim}

A \emph{continuum} is a nonempty compact connected metric space. Let $X$ be a continuum and $
p\in X$ a point. Then $p$ is a \emph{separating point} if $X \setminus \{p\}$ is disconnected. 
A continuum $X$ is an \emph{arc} if $X$ has exactly two nonseparating points called 
\emph{endpoints}. 

For each $n \in \N$, let $X_n$ be a closed interval and $f_{n+1} : X_{n+1} \to X_n$ a continuous 
function. The \emph{inverse limit} of $(f_n)_{n \in \N}$ is the space
$$\L(X_n, f_n) = \{ (x_0, x_1, \ldots) \in \prod_{n \in \N}X_n : \forall\,n\in\N,x_{n+1}\in f(x_n)\}$$
with the topology inherited from the product space $\prod_{n \in \N}X_n$. The functions $f_n$ are 
called \emph{bonding functions}. An inverse limit of continua is a continuum \cite{N}. We are 
concerned with inverse limits of functions $f : [0,1] \to [0,1]$. Denote the inverse limit of a 
single bonding function $f$ by $\Lf$. Bold symbols represent members of $[0,1]^\N$, for example
$\Bold x = (x_0, x_1,\ldots)$. Denote the graph of a function $f$ by $\Gamma(f)$.

Barge and Martin give the following characterization of an endpoint of an inverse limit $\Lf$ for 
a function $f : [0, 1] \to [0, 1]$.

\begin{theorem}\cite[Theorem 1.4]{BM2}\label{marcy}
Let $f : [0,1] \to [0,1]$ be a continuous function. Then $\Bold p$ is an endpoint of $\Lf$ if and 
only if for each integer $n$, each closed interval $J_n = [a_n, b_n]$ with $p_n \in (a_n, b_n)$, 
and each $\ep > 0$, there is a positive integer $k$ such that if $p_{n+k} \in J_{n+k}$ and 
$f^k(J_{n+k}) = J_n$, then $p_{n+k}$ does not separate 
$$(f^k \uhr J_{n+k})^{-1}([a_n, a_n+\ep])\textrm{ and }(f^k \uhr J_{n+k})^{-1}([b_n-\ep,b_n])$$ 
in $[a_{n+k}, b_{n+k}]$ $($$f^k$ is \emph{$\ep$-crooked with respect to $p_{n+k}$}$)$.
\end{theorem}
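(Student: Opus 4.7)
The plan is to prove each implication by contrapositive, treating ``endpoint'' in the Barge--Martin sense: $\Bold p$ is an endpoint of $\Lf$ precisely when any two subcontinua of $\Lf$ containing $\Bold p$ are comparable under inclusion.

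\emph{Forward direction.} Assume the crookedness condition fails at some $n$, $J_n = [a_n, b_n]$ with $p_n \in (a_n, b_n)$, and some $\ep > 0$ (shrinking $\ep$, we may also assume $a_n + \ep < p_n < b_n - \ep$). For each $k$ we are handed a ``bad'' $J^{(k)}_{n+k}$; split it at $p_{n+k}$ into a half $L^{(k)}$ containing $(f^k|J^{(k)}_{n+k})^{-1}([a_n, a_n+\ep])$ and a half $R^{(k)}$ containing $(f^k|J^{(k)}_{n+k})^{-1}([b_n-\ep, b_n])$. Since $L^{(k)}$ is a closed subinterval with $p_{n+k}$ at one end, $f^k(L^{(k)})$ is an interval containing $[a_n, p_n]$ and disjoint from $[b_n-\ep, b_n]$, hence equals $[a_n, c_k]$ with $p_n \leq c_k < b_n - \ep$; symmetrically $f^k(R^{(k)}) = [c'_k, b_n]$ with $a_n + \ep < c'_k \leq p_n$. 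For each $m$, the sequence $(f^{n+k-m}(L^{(k)}))_k$ (defined once $n+k \geq m$) lies in the Hausdorff-compact space of closed subintervals of $[0,1]$ and each term contains $p_m$. A diagonal subsequence extraction yields limits $L^\infty_m$ with $p_m \in L^\infty_m$, $f(L^\infty_{m+1}) = L^\infty_m$ (by continuity of $f$ on Hausdorff-convergent intervals), and $L^\infty_n = [a_n, c_\infty]$ with $c_\infty \leq b_n - \ep$. This compatibility realizes $H := \underleftarrow{\lim}(L^\infty_m, f|L^\infty_{m+1})$ as a subcontinuum of $\Lf$ containing $\Bold p$ with $n$-th projection $L^\infty_n$; the parallel construction on $R^{(k)}$ produces a subcontinuum $K \ni \Bold p$ with $n$-th projection $[c'_\infty, b_n]$, $c'_\infty \geq a_n + \ep$. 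Since $a_n$ lies in the projection of $H$ but not of $K$, and $b_n$ in the projection of $K$ but not of $H$, neither $H \subseteq K$ nor $K \subseteq H$, contradicting the endpoint property.

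\emph{Backward direction.} Suppose $\Bold p$ is not an endpoint, so there exist subcontinua $H, K \ni \Bold p$ and points $\Bold y \in H \setminus K$, $\Bold z \in K \setminus H$. The key observation is a coordinate-compatibility trick: if $y_m$ belongs to the $m$-th projection of $K$ for every $m$, then picking $w^{(m)} \in K$ with $w^{(m)}_m = y_m$ forces $w^{(m)}_\ell = f^{m-\ell}(y_m) = y_\ell$ for all $\ell \leq m$, so $w^{(m)} \to \Bold y$ coordinatewise and closedness of $K$ would place $\Bold y \in K$ --- a contradiction. Thus some $n'$ has $y_{n'}$ outside the $n'$-th projection of $K$, and the same argument forces this to persist at every $m \geq n'$; symmetrically we find $n''$ so $z_m$ lies outside the $m$-th projection of $H$ for every $m \geq n''$. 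Pick $n_0 \geq \max\{n', n''\}$ and write $H_{n_0} = [a,b]$, $K_{n_0} = [c,d]$; after relabeling, $a < c \leq p_{n_0} \leq b < d$ (the other orderings force nesting, which is excluded). Set $J_{n_0} := [a,d]$ and choose $\ep > 0$ with $a + \ep < c$ and $d - \ep > b$. For every $k$, take $J_{n_0+k} := H_{n_0+k} \cup K_{n_0+k}$, which is an interval (both pieces share $p_{n_0+k}$) with $f^k(J_{n_0+k}) = J_{n_0}$. Preimages of $[a, a+\ep]$ in $J_{n_0+k}$ lie in $H_{n_0+k} \setminus K_{n_0+k}$ (because $[a, a+\ep] \cap K_{n_0} = \emptyset$), preimages of $[d-\ep, d]$ lie in $K_{n_0+k} \setminus H_{n_0+k}$, and the connected intersection $H_{n_0+k} \cap K_{n_0+k}$, which contains $p_{n_0+k}$, sits between these two ``outside'' pieces in $J_{n_0+k}$; hence $p_{n_0+k}$ separates the preimages. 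So no $k$ witnesses crookedness for the triple $(n_0, J_{n_0}, \ep)$, the desired contradiction.

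The main technical hurdle is the diagonal Hausdorff-limit extraction in the forward direction: one must confirm the limits $L^\infty_m$ are genuinely compatible under $f$ (a direct consequence of continuity of $f$ and Hausdorff continuity of $A \mapsto f(A)$ on closed subsets of $[0,1]$) and that the realized $H$ truly projects onto $L^\infty_n$, which needs surjectivity of the restricted bonding maps $f|L^\infty_{m+1}$ to lift points inductively up the system. A parallel subtlety is the coordinate-compatibility trick in the backward direction: because $\Bold y \in \Lf$ determines $y_\ell$ from $y_m$ for $\ell \leq m$, the statement ``$\Bold y \notin K$'' propagates to ``$y_m$ is outside the projection of $K$ eventually'', which is exactly what makes the reduction to non-nested projections legitimate and powers the construction of $J_{n_0}$.
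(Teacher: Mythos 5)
The paper offers no proof of this statement: it is quoted verbatim from Barge and Martin \cite[Theorem 1.4]{BM2}, so there is no internal argument to compare yours against; judged on its own merits, your proof is correct. Two points deserve comment. First, you rightly read ``endpoint'' in the Barge--Martin sense (any two subcontinua of $\Lf$ containing $\Bold p$ are comparable under inclusion); this is the correct reading, since $\Lf$ need not be an arc and the paper's own definition of endpoint is stated only for arcs. Second, the two technical pillars of your argument hold up. In the forward direction, the bad intervals $J^{(k)}_{n+k}$ for different $k$ need not form a tower under $f$, and your diagonal Hausdorff-limit extraction is exactly the right repair: the hyperspace of closed subsets of $[0,1]$ is compact, Hausdorff limits of subintervals containing $p_m$ are subintervals containing $p_m$, the map $A \mapsto f(A)$ is Hausdorff-continuous, and an inverse limit over surjective restrictions projects onto each factor, so the two limiting towers really do give incomparable subcontinua through $\Bold p$ (note also that the separation hypothesis itself forces $a_n + \ep < p_n < b_n - \ep$, so your preliminary shrinking of $\ep$ is harmless but unnecessary). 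In the backward direction, the coordinate-compatibility argument correctly shows that $\Bold y \notin K$ makes $y_m \notin \pi_m(K)$ for all large $m$, so at every level $n_0 + k$ the projections of $H$ and $K$ cross rather than nest; consequently $p_{n_0+k}$ separates the preimages of $[a, a+\ep]$ and $[d-\ep, d]$ in $H_{n_0+k} \cup K_{n_0+k}$, and no $k$ witnesses $\ep$-crookedness for $(n_0, [a,d], \ep)$. The background facts you use without proof (projections of subcontinua are intervals satisfying $f(\pi_{m+1}(C)) = \pi_m(C)$; inverse limits and Hausdorff limits of continua are continua) are standard. In outline this is the same subcontinuum--tower correspondence that underlies Barge and Martin's original argument.
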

 
We also require the following result by Block and Schumann.

\begin{proposition}\cite[Proposition 3.1]{BS}\label{BS-prop}
Let $f : [0, 1] \to [0, 1]$ be a continuous function. Then $\Lf$ is a point if and only if $f$ 
admits exactly one fixed point and no periodic points.
\end{proposition}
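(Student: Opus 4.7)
I will handle the two directions separately. The forward direction is immediate from the correspondence between periodic points of $f$ and periodic orbits of the shift on $\Lf$; the backward direction reduces to showing that the core $K := \bigcap_{n \in \N} f^n([0,1])$ is a single point.

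For $(\Rightarrow)$, if $\Lf = \{\Bold p\}$, each fixed point $q$ of $f$ produces a constant sequence $(q,q,\ldots)\in\Lf$, forcing at most one fixed point; the IVT applied to $f(x)-x$ supplies at least one, so $f$ has a unique fixed point $p$ and $\Bold p = (p,p,\ldots)$. For any $q$ of period $k$, the $k$-periodic sequence $(q, f^{k-1}(q), \ldots, f(q), q, \ldots)$ lies in $\Lf$ and must equal $\Bold p$, forcing $q = p$; so $p$ is the only periodic point.

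For $(\Leftarrow)$, the sets $f^n([0,1])$ are nested closed intervals, so $K$ is a closed subinterval of $[0,1]$ containing $p$, with $f(K)=K$ by a standard compactness argument. Any $\Bold x \in \Lf$ satisfies $x_n = f^m(x_{n+m}) \in f^m([0,1])$ for every $m$, so $x_n \in K$; hence $K = \{p\}$ gives $\Lf = \{(p,p,\ldots)\}$, and it suffices to show $K = \{p\}$.

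Suppose for contradiction $K = [a,b]$ with $a < b$, and set $g := f\uhr K$, a continuous surjection of $[a,b]$ onto itself with $p$ its unique fixed point. The plan is to produce a period-2 point of $g$, contradicting the hypothesis. First, a short IVT argument rules out $g(a) = a$ and $g(b) = b$: either equality, combined with surjectivity and uniqueness of the fixed point, forces a second fixed point (for example, if $g(a) = a$, pick $v$ with $g(v) = b$; then $v \in (a,b)$, else $b$ would be fixed, and the IVT applied to $g - \mathrm{id}$ on $[v,b]$ yields a second fixed point). So $g(a) > a$, $g(b) < b$, and $p \in (a,b)$. Picking $u \in g^{-1}(a)\cap(a,b]$, the IVT on $[a,u]$ places $p$ in $(a,u)$, after which an IVT application to $g^2 - \mathrm{id}$ on $[u,b]$ (where $g^2(u) = g(a)$ and $g^2(b)\le b$) supplies a fixed point of $g^2$ necessarily distinct from $p$, hence of period 2. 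The main obstacle is this last step: in the subcase $g(a) < u$ the sign change of $g^2 - \mathrm{id}$ on $[u,b]$ can fail, and instead one must locate a 2-cycle by applying $g^2$ on a subinterval anchored at a preimage of $b$ in $[a,u]$, or alternatively invoke Sharkovsky's theorem (the hypothesis is equivalent to no period-2 point). Once any 2-cycle of $g$ is exhibited, it is a period-2 orbit of $f$, the desired contradiction, so $K = \{p\}$ and the proof is complete.
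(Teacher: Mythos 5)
The paper does not prove this proposition at all --- it is imported from Block and Schumann \cite[Proposition 3.1]{BS} --- so there is no internal proof to compare against and your argument has to stand on its own. Your architecture is sound: the forward direction via constant and periodic sequences is correct, and reducing the backward direction to showing that the core $K=\bigcap_{n}f^n([0,1])$ is degenerate --- equivalently, that a continuous surjection $g$ of a nondegenerate interval $[a,b]$ onto itself with a unique fixed point must have a point of period $2$ --- is exactly the right move, with the preliminary steps ($K$ is a closed invariant interval containing every coordinate of every point of $\Lf$; $g(a)>a$, $g(b)<b$, $p\in(a,b)$) all correct. The genuine gap is the final step: you concede that the sign change of $g^2-\mathrm{id}$ on $[u,b]$ fails when $g(a)<u$, and neither proposed repair is carried out. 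In particular, invoking Sharkovsky's theorem does not address the difficulty: Sharkovsky converts a periodic point of some period $k\ge 2$ into one of period $2$, but what is missing is the existence of \emph{any} non-fixed periodic point, and since the hypothesis already excludes every period $\ge 2$, reducing to period $2$ buys nothing. As written, the decisive implication is asserted rather than proved.

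The claim is true and the hole can be closed with a short argument in the spirit of what you attempted. If $g$ has no $2$-cycle, then the fixed points of $g^2$ coincide with those of $g$, so $g^2-\mathrm{id}$ vanishes only at $p$ and has constant sign on $[a,p)$ and on $(p,b]$; since $g^2(a)>a$ and $g^2(b)<b$ (equality would make $a$ or $b$ a fixed point of $g^2$, hence equal to $p$), we get $g^2(x)>x$ on $[a,p)$ and $g^2(x)<x$ on $(p,b]$. Now take $v<p<u$ with $g(v)=b$ and $g(u)=a$. The interval $g([p,u])$ contains both $g(p)=p$ and $g(u)=a$, hence contains $v$, so there is $q\in[p,u]$ with $g(q)=v$; moreover $q\ne p$ because $g(p)=p\ne v$. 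Then $q\in(p,b]$ and $g^2(q)=g(v)=b\ge q$, contradicting $g^2(q)<q$. With this (or an equivalent) argument supplied in place of the hand-waved subcase, your proof is complete.
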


In order to show that the Henderson  map admits a splitting sequence in Example \ref{HM}, 
we will require the following Lemma.

\begin{lemma}\cite[Lemma 1]{H}\label{lem-H}
	There is a map $f : [0, 1] \to [0, 1]$ such that if $[a, b, c, d]$ is an increasing 
	four-tuple of rational numbers in $(0, 1)$ (that is, $0 < a < b < c < d < 1$), then there 
	exists an integer $m$ such that if $n > m$ and $[u, w]$ is an interval such that 
	$f^n([u, w]) = [a, d]$, then $f^n \uhr [u, w]$ is crooked on $[a, b, c, d]$. 
\end{lemma}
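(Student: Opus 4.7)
I would construct $f$ as the uniform limit of a carefully controlled sequence of continuous surjections $f_n:[0,1]\to[0,1]$, arranging the crookedness condition one four-tuple at a time by a diagonal argument. First I would fix an enumeration $(a_k,b_k,c_k,d_k)_{k\in\N}$ of all increasing rational four-tuples in $(0,1)$, and start with a simple continuous surjection $f_0$ (for example, a tent-like map having only the two fixed points required by the intended application in Example \ref{HM}).

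Inductively I would build continuous surjections $f_n$ and integers $m_1<m_2<\cdots$ such that, (i) $\|f_n-f_{n-1}\|_\infty<\de_n$ for a rapidly decreasing sequence $(\de_n)$ fixed in advance, and (ii) for every $k\le n$ and every subinterval $[u,w]\se[0,1]$ with $f_n^{m_k}([u,w])=[a_k,d_k]$, the restriction $f_n^{m_k}\uhr[u,w]$ is crooked on $[a_k,b_k,c_k,d_k]$. The step $f_{n-1}\rightsquigarrow f_n$ is the geometric core: one inserts fine oscillations into $f_{n-1}$ at a scale bounded by $\de_n$, placed so that, after $m_n$ iterations, any preimage interval of $[a_n,d_n]$ is forced to traverse (in order) the regions $[c_n,d_n]$, $[a_n,b_n]$, $[c_n,d_n]$ (or the mirror pattern), which is exactly what crookedness on the four-tuple demands. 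The integer $m_n$ is chosen large enough that this traversal is unavoidable on every preimage component of $[a_n,d_n]$. Finally, set $f:=\lim_n f_n$; continuity is immediate from uniform convergence.

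The main obstacle is the quantitative bookkeeping linking (i) and (ii). A $C^0$-perturbation of size $\de$ to $f$ can perturb the $m_k$-th iterate by roughly $L_k\cdot\de$, where the amplification $L_k$ grows rapidly with $k$. Since crookedness on $[a_k,b_k,c_k,d_k]$ is an open condition on the $m_k$-th iterate with some explicit margin $\tau_k>0$ (depending on the gaps $b_k-a_k$, $c_k-b_k$, $d_k-c_k$), I must \emph{commit} to both $m_k$ and the crooked structure at stage $k$ and then choose the future schedule so that $\sum_{j>k}\de_j\cdot L_{k,j}<\tau_k$ for every $k$, where $L_{k,j}$ bounds the effect on the $m_k$-th iterate of a perturbation introduced at stage $j$. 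Because this must be arranged for all $k$ simultaneously, $(\de_n)$ has to be chosen extremely quickly decaying, driven by a table of tolerances computed inductively as the construction proceeds.

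A secondary but nontrivial difficulty is that the wiggles inserted at step $n$ should not spoil the ``only two fixed points'' property of $f_0$, since this feature of the Henderson map is what is invoked in Example \ref{HM}; this forces the inserted oscillations to be placed so that the graph still meets the diagonal only at the two original fixed points, while still being flexible enough to realize the required crooked pattern in the $m_n$-th iterate. Granted all of this, the crookedness achieved at stage $k$ survives every later perturbation, so $m:=m_k$ fulfils the conclusion of the lemma for the $k$-th four-tuple, and since every increasing rational four-tuple in $(0,1)$ appears in the enumeration, the full statement follows.
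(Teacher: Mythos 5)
The paper does not prove this lemma: it is imported verbatim as Lemma~1 of Henderson \cite{H}, whose map is an explicit object --- the parabola $x^{2}$ with an infinite family of non-intersecting v-shaped notches accumulating at the fixed point $(1,1)$, as the paper itself sketches in Example \ref{HM} --- and the crookedness of high iterates is extracted from a direct analysis of how orbits drift through the notched region, not from a limit of perturbations. Your plan is therefore a genuinely different route, closer in spirit to ``generic crookedness'' constructions built by diagonalizing over rational four-tuples, and in principle such an approach can be made to work; but as written it has a real gap.

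The gap is the assertion that crookedness at stage $k$ is ``an open condition on the $m_k$-th iterate with some explicit margin $\tau_k$'' and hence survives all later perturbations once $\sum_{j>k}\de_j L_{k,j}<\tau_k$. Crookedness here is universally quantified over \emph{all} intervals $[u,w]$ with $f^{m_k}([u,w])=[a_k,d_k]$, and that family of intervals is not stable under $C^0$-small perturbation: the wiggles you insert at stage $j>k$ are designed precisely to create new folds, hence new preimage components, and in particular can create intervals $[u,w]$ with $f_j^{m_k}([u,w])=[a_k,d_k]$ that have no counterpart for $f_{j-1}$. For these new intervals the margin inherited from stage $k$ says nothing, so the inductive persistence of your condition (ii) is unjustified; to repair it you would need a structural argument that every newly created preimage interval factors through an old one by a continuous surjection (crookedness \emph{is} preserved under precomposition with surjections, since the pattern $b,c,b$ is a palindrome), and that is a substantive step, not a consequence of $C^0$-closeness. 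A second, smaller omission: condition (ii) secures crookedness only at the single iterate $n=m_k$, whereas the lemma demands it for every $n>m_k$. This does follow, but only via the unstated factorization $f^{n}=f^{m_k}\circ f^{\,n-m_k}$ together with the same precomposition fact applied to the surjection $f^{\,n-m_k}\uhr[u,w]$ onto $J:=f^{\,n-m_k}([u,w])$, which satisfies $f^{m_k}(J)=[a_k,d_k]$; you should state this, since it is also exactly the ingredient needed to close the first gap.
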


By crooked it is meant that $f^n([u, w])$ contains $[a, d]$ and there is in $[u, w]$ either an 
inverse of $c$ under $f^n$ between two inverses of $b$ or an inverse of $b$ under $f^n$ between 
two inverses of $c$. The Henderson map satisfies the above lemma.

For each $m, n \in \N$, $m < n$, denote the sequence of natural numbers from $m$ to $n$ (inclusive) 
by $[m, n]$ and let 
$$G_{m,n}(f) = \{ (x_m, \ldots, x_n) \in \N^{[m,n]} : \forall \, i \in [m, n-1], f(x_{i+1}) = x_i \}.$$

We define projection functions: $$\pi_n : \Lf \to [0, 1] \textrm{  by } \pi_n(\Bold x) = x_n,$$ 
$$\pi_{n+1, n} : \Lf \to \Gamma(f) \textrm{  by } \pi_{n+1, n}(\Bold x) = (x_{n+1}, x_n),$$ 
and, if $m < n$, define
$$\pi_{[m,n]} : \Lf \to G_{m,n}(f) \textrm{  by } \pi_{[m,n]}(\Bold x) = (x_j)_{j \in [m,n]}.$$ 
If $f$ is surjective, each of these projection functions is onto.

A \emph{basic open subset of $\Lf$} is a set of the form: 
$$U = \bigcap \{ \pi^{-1}_{n_j}(U_j) : j \le k \} \cap \Lf,$$
where $\{ k \} \cup \{ n_j : j \le k\} \subset \N$ and each $U_j$ is an open subinterval of $[0, 1]$.


\section{Splitting sequences}\label{sec-split}

In this section we define tight sequences, and splitting sequences which are a subclass of tight 
sequences. We prove a number of lemmas that give properties of splitting sequences required to 
prove our main theorem.

\begin{definition}\label{def-tight}
Let $f : [0, 1] \to [0, 1]$ be a continuous surjective function and 
$$\si = \ls T_n\subsetneq [0, 1] : n \in \N \rs$$
a sequence of closed intervals. If for each $n \in \N$, $f(T_{n+1}) = T_n$ and there exists 
$m \in \N$ such that for each $n > m$, $T_n$ is nondegenerate, then $\si$ is a \emph{tight} 
sequence. The subcontinuum $\L(T_n, f \uhr T_n)$ is denoted $L(\si)$.
\end{definition}

\begin{definition}\label{def-gen}
Let $f : [0, 1] \to [0, 1]$ be a surjective continuous function. Let $\Bold p \in \Lf$, 
$m \in \N$ and $[a, b] \subset [0, 1]$ be a nondegenerate closed interval such that 
$p_m \in (a,b)$. Let $C \subset \Lf$ be the component of $\pi_m^{-1}([a,b])$ containing 
$\Bold p$. Then $\si = \ls \pi_n(C) : n \in \N \rs$ is a \emph{generated sequence}, or 
more specifically, the \emph{sequence generated by $\Bold p$, $m$ and $[a,b]$}. 
\end{definition}

\begin{lemma}\label{L-exists}
Let $f : [0, 1] \to [0, 1]$ be a continuous surjective function. If $\Bold p \in \Lf$, 
$m \in \N$, $[a, b] \subset [0, 1]$ is nondegenerate, $p_m \in (a, b)$ and $\si$ is the  
sequence generated by $\Bold p$, $m$ and $[a, b]$, then $\si$ is tight.
\end{lemma}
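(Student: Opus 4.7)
The plan is to verify each of the three defining conditions of a tight sequence in Definition \ref{def-tight} for $\si = \langle T_n : n \in \N\rangle$, with $T_n = \pi_n(C)$.

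First I would note that $\pi_m^{-1}([a,b])$ is closed in the continuum $\Lf$, so its component $C$ containing $\Bold p$ is closed, connected, and compact. Continuity of $\pi_n$ then makes each $T_n$ a compact connected subset of $[0,1]$, that is, a closed subinterval. The bonding identity $f(T_{n+1}) = T_n$ follows directly from the inverse-limit constraint $f(x_{n+1}) = x_n$ on points $\Bold x \in C$: one inclusion is immediate, and for the other, any $t \in T_n$ lifts to some $\Bold x \in C$ whose $(n+1)$-st coordinate lies in $T_{n+1}$ and maps to $t$. Iterating yields $f^{n-m}(T_n) = T_m$ for $n \ge m$, so the surjectivity of $f^{n-m}$ combined with $T_m \subseteq [a,b] \subsetneq [0,1]$ gives $T_n \subsetneq [0,1]$ for every $n \ge m$.

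The crux is the eventual nondegeneracy of $T_n$. I would obtain this via the boundary-bumping theorem for continua: in the continuum $\Lf$, every component of a proper closed subset meets its topological boundary. Here $H := \pi_m^{-1}([a,b])$ is a proper closed subset of $\Lf$ (since $\pi_m$ is onto and $[a,b] \neq [0,1]$), and its boundary is contained in $\pi_m^{-1}(\{a,b\})$ because the open set $\pi_m^{-1}((a,b))$ is an open subset of $H$. Applying the theorem to the component $C$ produces a point of $C$ whose $m$-th coordinate is $a$ or $b$; together with $p_m \in (a,b) \cap T_m$ and the connectedness of $T_m$, this forces $T_m$ to be a nondegenerate closed subinterval of $[a,b]$. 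Finally, because $f$ sends degenerate intervals to degenerate intervals, the relation $f(T_{n+1}) = T_n$ propagates nondegeneracy: whenever $T_n$ is nondegenerate so is $T_{n+1}$. Inductively, $T_n$ is nondegenerate for every $n \ge m$, which verifies the remaining requirement of Definition \ref{def-tight}.

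The main obstacle is the boundary-bumping step, which is the only place nontrivial continuum-theoretic input is needed; everything else follows routinely from the continuity and surjectivity of $f$ together with the inverse-limit constraints.
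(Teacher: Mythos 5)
Your proof is correct and follows essentially the same route as the paper's: boundary bumping applied to the component $C$ of $\pi_m^{-1}([a,b])$ to obtain nondegeneracy, followed by forward propagation of nondegeneracy through the relation $f(T_{n+1}) = T_n$. Your version is slightly more explicit in that it pins down nondegeneracy of $T_m$ itself (via $p_m \in (a,b)$ versus a boundary point in $\pi_m^{-1}(\{a,b\})$) rather than of $C$ as a subset of $\Lf$, but this is only a cosmetic difference.
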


\begin{proof}
Let $C \subset \Lf$ be the component of $\pi_m^{-1}([a,b])$ containing $\Bold p$. First 
observe that $C$ is nondegenerate since $\Bold p \in \Int_{\Lf}(\pi_m^{-1}([a, b]))$ and as 
a component of $\pi_m^{-1}([a, b])$, $C$ must also meet the boundary of $\pi_m^{-1}([a, b])$.

Since $C$ is nondegenerate, for some $m \in \N$, $\pi_m(C)$ is nondegenerate. Thus if 
$n \ge m$ and $\pi_n(C)$ is nondegenerate, then it follows that $\pi_{n+1}(C)$ is nondegenerate 
since $f_{n+1}(\pi_{n+1}(C)) = \pi_n(C)$, and so by induction $\si$ is tight.
\end{proof}

\begin{definition}\label{def-split}
Let $f : [0,1] \to [0,1]$ be a surjective continuous function. If 
$$\si = \ls T_n = [l_n, r_n] : n \in \N \rs$$
is a tight sequence admitted by $f$, $N \se \N$ an infinite set, and 
$$\{S_n \subset [0, 1] : n \in N\}$$
is a collection of nondegenerate closed intervals such that for each $n \in N$,  
$S_n \cap T_n \subset \{l_n,r_n\}$, and $f(S_n) = f(T_n)$, then $\si$ is 
a \emph{splitting sequence admitted by $f$ and witnessed by $\{S_n : n \in N \}$}.
\end{definition}

\begin{example}
If $f : [0,1] \to [0,1]$ is the tent map illustrated in Figure \ref{bucket}, then $f$ admits a splitting 
sequence. Let $T_0 = [\frac14, \frac78]$. If $T_n$ has been defined let $T_{n+1}$ be the component of 
$f^{-1}(T_n)$ contained in $[\frac 12,1]$ and $S_{n+1}$ be the component of $f^{-1}(T_n)$ contained in 
$[0,\frac 12]$. Then $\ls T_n:n\in\N\rs$ is a splitting sequence witnessed by the sets $S_n$.

\begin{figure}[htbp]
\begin{tikzpicture}[scale=0.4]
\draw[gray](0,0) -- (8,0);
\draw[gray](0,0)--(0,8);
\draw[gray](0,8)--(8,8);
\draw[gray](8,0)--(8,8);
\draw[ blue,thick](4,8)--(0,0);
\draw[ blue,thick](4,8)--(8,0);

\draw[gray,dashed](4.5,7)--(0,7);
\draw[gray,dashed](7,2)--(0,2);

\draw[white](0,4.5)
node[black,left]{$T_0$};

\draw[gray,dashed](3.5,7)--(3.5,0);
\draw[gray,dashed](7,2)--(7,0);
\draw[gray,dashed](4.5,7)--(4.5,0);
\draw[gray,dashed](1,2)--(1,0);

\draw[white](2.4,0)
node[black,below]{$S_1$};

\draw[white](5.9,0)
node[black,below]{$T_1$};

\end{tikzpicture}
\caption{Graph of a tent map.}\label{bucket}
\end{figure}
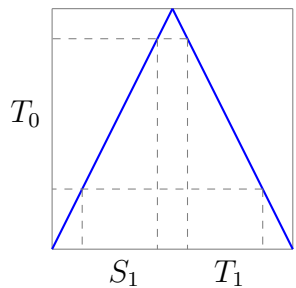
\end{example}

\begin{example}
	The function $f : [0,1] \to [0,1]$ whose graph is shown in Figure \ref{fig-arc} does not admit a 
	splitting sequence. If $\Bold x \in \Lf$ and $x_0 \ne \frac56$, a fixed point of $f$, then $x_n \to 0$. 
	Hence for any tight sequence $\ls T_n = [l_n, r_n] : n \in \N \rs$ there exists $m \in \N$ such that 
	$f^{-1}(r_n) < \frac34$ for every $n > m$ and so there does not exist an interval $S_n \subset [0,1]$ 
	such that $|S_n \cap T_n| \le 1$ and $f(S_n) = f(T_n)$, where $|A|$ denotes the cardinality of a set $A$.
	
	\begin{figure}[htbp]
		\begin{tikzpicture}[scale=0.4]
		\draw[gray](0,0) -- (8,0);
		\draw[gray](0,0)--(0,8);
		\draw[gray](0,8)--(8,8);
		\draw[gray](8,0)--(8,8);
		\draw[ blue,thick](4,8)--(0,0);
		\draw[ blue,thick](4,8)--(8,6);
		
		\draw[gray,dashed](4,8)--(4,0)
		node[below,black]{$\frac12$};
		\draw[gray,dashed](8,6)--(0,6)
		node[left,black]{$\frac34$};
		
		\end{tikzpicture}
		\caption{Graph of a function whose inverse limit is an arc.}\label{fig-arc}
	\end{figure}
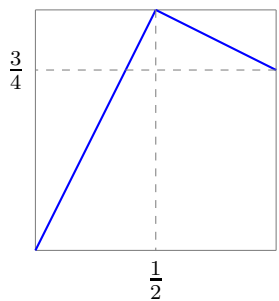
\end{example}

\begin{example}\label{HM}
	Let $f : [0, 1] \to [0, 1]$ be the Henderson map \cite{H}. Recall, $f$ has exactly two fixed points, 0 and 1,
	and for every $x \in (0, 1)$, $f(x) < x$. Its construction is rather complex, but may be described roughly as 
	starting with $g(x) = x^2$ and notching its graph with an infinite set of non-intersecting v-shape notches
	which accumulate at $(1, 1)$. The map $f$ is continuous and $\Lf$ is the pseudo-arc.
	
	We will show that $f$ has a splitting sequence. Let $[a_0, b_0, c_0, d_0]$  be an increasing four-tuple of 
	rational numbers in $(0,1)$. Let $T_0 = [b_0, c_0]$. By Lemma \ref{lem-H}, there exist increasing sequences 
	$\ls n_k \in \N : k \in \N \rs$, $n_k < n_{k+1}$, and $\ls [u_k, w_k] \subset (0, 1) : k \in \N \rs$, 
	$w_k < u_{k+1}$, such that $f^{n_k}([u_k, w_k]) = [a_0, d_0]$ and is crooked on $[a_0, b_0, c_0, d_0]$. Note 
	that $f^{n_k - n_{k-1}}([u_k, w_k]) = [u_{k-1}, w_{k-1}]$. For every $k \in \N$ choose closed intervals 
	$T_{n_k}$ and $S_{n_k}$ in $[u_k, w_k]$ such that 
	$$f^{n_k - n_{k-1}}(T_{n_k}) = f^{n_k - n_{k-1}}(S_{n_k}) = T_{n_{k-1}},$$
	and $|T_{n_k} \cap S_{n_k}| \le 1$, and observe that $f^{n_k}(T_{n_k}) = [b_0, c_0]$. Such choice is possible
	since $f^{n_k} \uhr [u_k, w_k]$ is crooked on $[a_0, b_0, c_0, d_0]$, meaning that there is in $[u_k, w_k]$ 
	either an inverse of $c_0$ under $f^{n_k}$ between two inverses of $b_0$ or an inverse of $b_0$ under $f^{n_k}$ 
	between two inverses of $c_0$. Hence for each $k$, $f^{-n_k}((b_0,c_0))$ has three components, and so 
	$f^{-(n_k-n_{k-1})}(\Int(T_{n_{k-1}}))$ has three components.
	
	For each $k \in \N$ and $j$, $0 < j < n_k - n_{k-1}$, let $T_{n_k - j} = f^j(T_{n_k})$. Then $\ls T_n : n \in \N \rs$ 
	is a splitting sequence witnessed by $\ls S_{n_k} : k \in \N \rs$.
\end{example}

\begin{lemma}\label{lem-2cyclesonly}
Let $f : [0,1] \to [0,1]$ be a surjective continuous function. If $f$ admits a periodic point with 
period $m$ for any $m > 2$ then $f$ admits a splitting sequence.
\end{lemma}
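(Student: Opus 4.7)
The plan is to verify the hypothesis of Lemma~\ref{lem-A-B}: I will produce two intervals $A, B \se [0,1]$ with disjoint interiors and $f(A) = f(B)$, together with a subinterval $D \se A$ and a positive integer $k$ with $f^k(D) = A$.

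Let $P = \{p_1 < p_2 < \cdots < p_m\}$ be a period-$m$ orbit of $f$ with $m > 2$, and let $\pi$ be the induced orbit permutation, so $f(p_i) = p_{\pi(i)}$. Set $J_i = [p_i, p_{i+1}]$ for $1 \le i \le m-1$ and form the Markov digraph $G$ on vertex set $\{J_1,\ldots,J_{m-1}\}$ with a directed edge $J_i \to J_j$ precisely when $f(J_i) \supseteq J_j$. Since $f(J_i)$ is an interval containing both $p_{\pi(i)}$ and $p_{\pi(i+1)}$, the out-degree of $J_i$ in $G$ is at least $|\pi(i+1)-\pi(i)|$, so $G$ has at least $\sum_{i=1}^{m-1}|\pi(i+1)-\pi(i)|$ edges, the total variation of the sequence $\pi(1),\pi(2),\ldots,\pi(m)$. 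This total variation equals $m-1$ only when consecutive terms differ by exactly one, forcing $\pi$ to be the identity or the reversal $i \mapsto m+1-i$; since neither is a single $m$-cycle when $m \ge 3$, $G$ has at least $m$ edges and only $m-1$ vertices. Thus some vertex has in-degree $\ge 2$, and since $\pi$ is a bijection, every vertex has out-degree $\ge 1$.

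Next I locate a vertex $J_s$ lying in some terminal strongly connected component $\mc C$ of $G$ and having total in-degree $\ge 2$. The out-degree condition combined with $\mc C$ being terminal forces each vertex of $\mc C$ to have an in-neighbor and an out-neighbor inside $\mc C$, so $\mc C$ is non-trivial. Suppose for contradiction no such $J_s$ exists. Then every vertex of in-degree $\ge 2$ lies outside every terminal SCC. Tracing a directed path from such a vertex $v$ (possible because out-degrees are all $\ge 1$) one must eventually enter some terminal SCC $\mc C$ at an entry vertex $w$; by the SCC structure $w$ has an in-neighbor inside $\mc C$, and together with the edge from outside $\mc C$ along the path, $w$ has total in-degree $\ge 2$, contradicting our assumption. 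Hence $J_s$ exists; pick an in-neighbor $J_i$ of $J_s$ inside $\mc C$ and a distinct in-neighbor $J_j \ne J_i$.

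Since $f(J_i) \supseteq J_s$ and $f(J_j) \supseteq J_s$, the intermediate value theorem yields closed subintervals $A \se J_i$ and $B \se J_j$ with $f(A)=f(B)=J_s$; because $J_i \ne J_j$, the intervals $A$ and $B$ meet in at most one endpoint of $P$ and so have disjoint interiors. Because $J_s$ and $J_i$ lie in the same SCC $\mc C$, there is a directed path $J_s = J_{s_0} \to J_{s_1} \to \cdots \to J_{s_{k-1}} = J_i$ in $G$ of some length $k-1 \ge 0$ (covering $k = 1$ when $J_s = J_i$ via a self-loop), yielding $f^{k-1}(J_s) \supseteq J_i \supseteq A$. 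Thus $f^k(A) = f^{k-1}(f(A)) = f^{k-1}(J_s) \supseteq A$, and the intermediate value theorem once more provides $D \se A$ with $f^k(D) = A$. Applying Lemma~\ref{lem-A-B} then delivers a splitting sequence admitted by $f$. The main obstacle is the combinatorial step of placing a vertex of in-degree $\ge 2$ inside a terminal SCC, and the downstream-tracing contradiction above is the key to handling it uniformly across all $m>2$ (including cases such as $m$ a power of $2$, where Sharkovsky-type reductions to period $3$ are unavailable).
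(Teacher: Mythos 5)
Your argument is correct, but it takes a genuinely different route from the paper's. The paper works directly on the orbit: after normalising so that $x_0$ is the leftmost orbit point, it runs a case analysis on the relative positions of $x_0$, $x_1$, $x_{m-2}$ and $x_{m-1}$ to produce three orbit points $x_i<x_j<x_k$ at which $f$ folds (that is, $f(x_j)$ lies above or below both $f(x_i)$ and $f(x_k)$), and then builds the splitting sequence explicitly by pulling $[f(x_{m-1}),f(x_0)]$ backwards around the cycle, the periodicity of the orbit guaranteeing that every $m$-th term of the tight sequence returns to the spatial interval carrying $A$ and so can be witnessed inside $B$. You instead form the Markov graph of the orbit, use the total-variation count to get at least $m$ edges on $m-1$ vertices (hence a vertex of in-degree two), place such a vertex inside a terminal strongly connected component via the condensation argument, and hand everything to Lemma \ref{lem-A-B}. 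Your version is uniform in $m$ and in the combinatorial type of the cycle, replaces the paper's only-sketched case analysis (``the proof in all cases is analogous'') by pigeonhole plus recurrence, and locates the recurrence needed for the witnesses in the SCC structure rather than in the explicit periodic pull-back; the price is importing the standard $P$-graph machinery from combinatorial dynamics. Two cosmetic points: in the condensation step it is not that every continuation of a path from $v$ must enter a terminal SCC, only that a suitably chosen path does (which is all you actually use); and you verify ``there is $D\se A$ with $f^k(D)=A$'' rather than the literal hypothesis of Lemma \ref{lem-A-B} about a nondegenerate component of $f^{-k}(A)$ lying in $A$ --- this is harmless, since that is precisely the form in which the proof of Lemma \ref{lem-A-B} uses its hypothesis (and the form quoted in the paper's introduction).
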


\begin{proof}
In this proof we may write a closed interval $[a, b]$ if we do not know whether $a < b$ or $b < a$ 
and it is assumed to be the appropriate nonempty closed interval.

Suppose $x_0$ is a periodic point with period $m > 2$ and for each $i < m$, $f^i(x_0) = x_i$. Without 
loss of generality suppose that $x_0 = \min\{x_n : n < m\}$. Then $x_1 = f(x_0) > x_0$, and 
$f(x_{m-1}) = x_0 < x_1$.

Suppose $f(x_1) > x_1$. If $x_0 < x_{m-1} < x_1$, since $f(x_{m-1}) < f(x_0) < f(x_1)$ there are 
closed intervals $A \se [x_0, x_{m-1}]$ and $B \subset [x_{m-1}, x_1]$ such that 
$f(A) = f(B) = [f(x_{m-1}), f(x_0)]$, see Figure \ref{diag-2cyclesonly}. Moreover, for each $i < m$ 
there is a closed subinterval $A_i$ of $[x_i, x_{i-1}]$ such that $f(A_i) = [f(x_i), f(x_{i-1})]$.

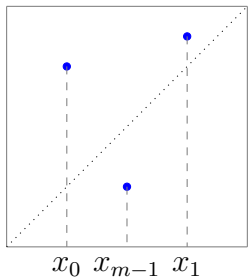
\begin{figure}[htbp]
\begin{tikzpicture}[scale=0.4]
\draw[gray](0,0) -- (8,0);
\draw[gray](0,0)--(0,8);
\draw[gray](0,8)--(8,8);
\draw[gray](8,0)--(8,8);
\filldraw[fill=blue](2,6)[blue]circle(1.2mm);
\filldraw[fill=blue](4,2)[blue]circle(1.2mm);
\filldraw[fill=blue](6,7)[blue]circle(1.2mm);

\draw[gray,dashed](2,6)--(2,0)
node[below,black]{$x_0$};
\draw[gray,dashed](4,2)--(4,0)
node[below,black]{$x_{m-1}$};
\draw[gray,dashed](6,7)--(6,0)
node[below,black]{$x_1$};

\draw[ dotted](0,0)--(8,8);

\end{tikzpicture}
\caption{Graph showing $f(x_0), f(x_1)$ and $f(x_{m-1})$ for the first case of lemma \ref{lem-2cyclesonly}}
\label{diag-2cyclesonly}
\end{figure}

Let $T_0 = [f(x_{m-1}), f(x_0)]$ and $T_1 = A$. If $n \ge 1$ and $T_n$ has been defined such that 
for some $i < m$, $T_n \se [x_i, x_{i+1}]$, let $T_{n+1}$ be a subinterval of $[x_{i-1}, x_{i}]$ 
such that $f(T_{n+1}) = T_n$. Then $\si = \ls T_n  :n \in \N \rs$ is tight. For each $n \in \N$ 
there is a set $S_{mn+1} \subset B$ such that $f(S_{mn+1}) = f(T_{mn+1})$ and 
$S_{mn+1} \cap T_{mn+1} \se \{x_{m-1}\}$. Hence $\tau$ is a splitting sequence.
 
The proof in all cases is analogous. We need only show that in each case there are three points 
$x_i, x_j, x_k$ in the cycle such that $x_i < x_j < x_k$ and $f(x_j)$ is either greater than or 
less than both $f(x_i)$ and $f(x_k)$. If $[f(x_i), f(x_j)] \subset [f(x_j), f(x_k)]$ then take the 
sets $A$ and $B$ used to define the sets $T_n$ and $S_n$, to be subintervals of $[x_i, x_j]$ and 
$[x_j, x_k]$ respectively, such that $f(A) = f(B) = [f(x_{i}), f(x_j)]$, and vice versa.

We show that we can always find three points $x_i, x_j, x_k$ as required. If $f(x_1) > x_1$ and 
$x_1 < x_{m-1}$ then we can take $x_i = x_0$, $x_j = x_1$ and $x_k = x_{m-1}$. If $x_1 > f(x_1)$ 
and $x_{m-1} < x_1$ then we can choose $x_i = x_0$, $x_j = x_{m-1}$ and $x_k = x_1$.

Suppose $x_1 < x_{m-1}$. Then $f(x_{m-2}) = x_{m-1} > x_1$, so if $x_0 < x_{m-2} < x_1$, let  
$x_i = x_0$, $x_j = x_{m-2}$ and $x_k = x_1$. If $x_0 < x_1 < x_{m-1} < x_{m-2}$, let 
$x_i = x_0$, $x_j = x_{m-1}$ and $x_k = x_{m-2}$. Finally, if $x_0 < x_1 < x_{m-2} < x_{m-1}$, 
let $x_i = x_0$, $x_j = x_{m-2}$ and $x_k = x_{m-1}$. 
\end{proof}

In the preceding proof we used a certain technique in our construction of splitting sequences. 
As we will frequently require it, the technique is captured in the following lemma.

\begin{lemma}\label{lem-A-B}
Let $f : [0, 1] \to [0, 1]$ be a surjective continuous function. If there exist $k > 0$, closed 
subintervals $A$ and $B$ of $[0, 1]$, such that $f(A) = f(B)$, $|A \cap B| \le 1$, and there is 
a nondegenerate component of $f^{-k}(A)$ in $A$, then $f$ admits a splitting 
sequence.
\end{lemma}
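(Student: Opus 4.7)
The plan is to construct a tight sequence $\sigma = \langle T_n \rangle$ directly from the data and use $B$ as the source of infinitely many witnesses. First, I would build a nested family $D = D_0 \supseteq D_1 \supseteq D_2 \supseteq \cdots$ of nondegenerate closed subintervals of $A$ with $f^k(D_{j+1}) = D_j$ for all $j \ge 0$. The inductive step follows from a standard Intermediate Value Theorem argument: given $D_j \subseteq A$ with $f^k(D_j) = D_{j-1} \supseteq D_j$ (taking $D_{-1} = A$), one picks preimages $u,v \in D_j$ of the two endpoints of $D_j$ under $f^k$ and then shrinks $[u,v]$ inward from each side, discarding repeated hits of the same endpoint value, to obtain a closed subinterval $D_{j+1} \subseteq D_j$ on which $f^k$ avoids the endpoints of $D_j$ in the interior and attains each one at a boundary point, giving $f^k(D_{j+1}) = D_j$ exactly.

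Second, I would anchor the tight sequence at the indices $jk+1$ by setting $T_{jk+1} = D_j$ for $j \ge 0$, interpolate via $T_{jk+1+i} = f^{k-i}(D_{j+1})$ for $1 \le i \le k$, and set $T_0 = f(D)$. Checking $f(T_{n+1}) = T_n$ is a one-line telescoping in each case; for instance, $f(T_{jk+2}) = f(f^{k-1}(D_{j+1})) = f^k(D_{j+1}) = D_j = T_{jk+1}$, while $f(T_{jk+1+i}) = f^{k-i+1}(D_{j+1}) = T_{jk+i}$ for $2 \le i \le k$. Every $T_n$ is nondegenerate because $f^k(D_{j+1}) = D_j$ is nondegenerate, which rules out any intermediate iterate $f^{k-i}$ being constant on $D_{j+1}$.

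Third, for the witnesses, I would take $N = \{jk+1 : j \ge 0\}$. For each $n = jk+1 \in N$, since $f$ restricted to $B$ is a continuous surjection onto $f(B) = f(A) \supseteq f(D_j)$, the Intermediate Value Theorem yields a nondegenerate closed $S_n \subseteq B$ with $f(S_n) = f(D_j) = f(T_n)$. Because $S_n \subseteq B$ and $T_n = D_j \subseteq A$, one has $S_n \cap T_n \subseteq A \cap B$, which has cardinality at most one and therefore lies within $\{l_n, r_n\}$, matching the definition of a splitting sequence.

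The main obstacle is verifying that each $T_n$ is a \emph{proper} subinterval of $[0,1]$. I would first establish $A \subsetneq [0,1]$: otherwise $B \subseteq A$ forces $|B| = |A \cap B| \le 1$, contradicting the nondegeneracy of $B$, which itself follows because $f(B) = f(A) \supseteq f(D)$ is nondegenerate (since $f^k(D) = A$ being nondegenerate prevents $f$ from being constant on $D$). Then $T_{jk+1} = D_j \subseteq A \subsetneq [0,1]$, and for the remaining indices I would induct using surjectivity of $f$: whenever $J \subsetneq [0,1]$, also $f^{-1}(J) \subsetneq [0,1]$, and hence $T_{n+1} \subseteq f^{-1}(T_n) \subsetneq [0,1]$.
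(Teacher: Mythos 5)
Your proposal is correct and follows essentially the same route as the paper's proof: build the tight sequence by repeatedly choosing subintervals of $A$ at indices $1, k+1, 2k+1, \ldots$ that cover the previous one under $f^k$, fill in the intermediate indices with forward images, and extract the witnesses $S_{jk+1} \subseteq B$ from $f(B) = f(A)$. The only difference is that you spell out the routine verifications (nondegeneracy of the interpolated $T_n$, properness in $[0,1]$, and that $S_n \cap T_n$ lands in an endpoint of $T_n$) that the paper leaves implicit.
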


\begin{proof}
Let $T_1 = A$ (and $T_0 = f(A)$). Since there is a nondegenerate component of $f^{-k}(A)$ in $A$, 
we can choose $T_{k+1}$ to be a subinterval of $A$ such that $f^k(T_{k+1}) = T_1$. For 
$k \ge i \ge 2$ let $T_i = f(T_{i+1})$. Obviously $T_1 = f(T_2) = f^k(T_{k+1})$. Analogously, if 
$n > k$, $n = 0\mod k$ and $T_{n-k+1}$ has been defined, let $T_{n+1}$ be a subinterval of $A$ 
such that $f^k(T_{n+1}) = T_{n-k+1}$. For $n \ge i \ge n-k+1$ let $T_i = f(T_{i+1})$. Then 
$\si = \ls T_i : i \in \N \rs$ is a tight sequence. Since for every $n > 0$ $T_{nk+1} \se A$ and 
$f(A) = f(B) = T_0$, for every $n > 0$ we can choose an interval $S_{nk+1} \se B$ such that 
$f(S_{nk+1}) = f(T_{nk+1})$. Thus $\si$ is a splitting sequence. 
\end{proof}

If $A$ and $B$ are intervals and  $k \in \N$ as in Lemma \ref{lem-A-B}, we say that 
\emph{the pair $(A, B)$ generates a splitting sequence of order $k$}.

\begin{example}\label{ex-B-S}
We give an example from \cite{BS} which shows that there exists a piecewise monotone map which has more than one 
fixed point and no points of other periods, but its inverse limit is not an arc.
	
Let $f, g : [0,1] \to [0,1]$ be maps whose graphs are shown in Figure \ref{fig-B-S}. Obviously, the map $g = f^2$ 
has more than one fixed point and no points of other periods. Also, it is well known that $\Lf = \Lg$ and  is
homeomorphic to a $\sin \frac{1}{x}$-continuum \cite{N}. 
	
It is easy to see that the both maps have splitting sequences. We will use the above criterion. Let 
$A = [\frac12, 1]$ and $B = [\frac14, \frac12]$. Then $A \cap B = \{ \frac12 \}$, $f(A) = g(A) = A$ and 
$f(B) = g(B) = A$. Therefore, $(A, B)$ generates a splitting sequence of order $1$.
	
	\begin{figure}[htbp]
		\begin{tikzpicture}[scale=0.4]
		\draw[gray](0,0) -- (8,0);
		\draw[gray](0,0)--(0,8);
		\draw[gray](0,8)--(8,8);
		\draw[gray](8,0)--(8,8);
		\draw[ blue,thick](4,8)--(0,0);
		\draw[ blue,thick](4,8)--(8,4);
		
		\draw[gray,dashed](4,8)--(4,0)
		node[below,black]{$\frac12$};
		\draw[gray,dashed](8,4)--(0,4)
		node[left,black]{$\frac12$};

		\draw[gray](10,0) -- (18,0);
		\draw[gray](10,0)--(10,8);
		\draw[gray](10,8)--(18,8);
		\draw[gray](18,0)--(18,8);
		\draw[ blue,thick](12,8)--(10,0);
		\draw[ blue,thick](12,8)--(14,4);
		\draw[ blue,thick](18,8)--(14,4);
		\draw[gray,dashed](2,8)--(2,0)
		node[below,black]{$\frac14$};
		
		\draw[gray,dashed](14,8)--(14,0)
		node[below,black]{$\frac12$};
		\draw[gray,dashed](18,4)--(10,4)
		node[left,black]{$\frac12$};
		\draw[gray,dashed](12,8)--(12,0)
		node[below,black]{$\frac14$};
		
		\end{tikzpicture}
		\caption{Graphs of functions $f$ (left) and $g = f^2$ (right) whose inverse limits are the
			$\sin \frac{1}{x}$-continuum.}\label{fig-B-S}
	\end{figure}
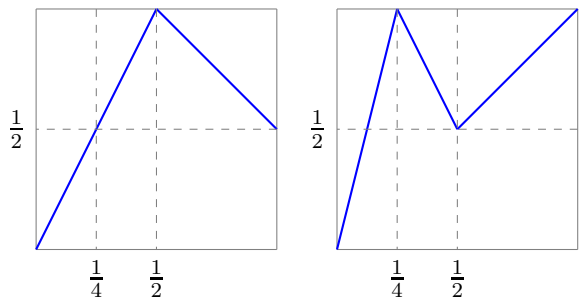
\end{example}

\begin{lemma}\label{lem-no-extreme-pt}
Let $f : [0,1] \to [0,1]$ be a continuous function such that $f$ does not admit a splitting 
sequence. If $0 \le d < e \le 1$ and either $d$ and $e$ are fixed points or $\{d, e\}$ is a 
2-cycle, then there is exactly one component $C$ of $f^{-1}((d, e))$ such that $f(C) = (d, e)$.
\end{lemma}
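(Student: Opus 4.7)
The plan is to prove the contrapositive: if there exist two distinct components $C_1 \ne C_2$ of $f^{-1}((d,e))$ with $f(C_1) = f(C_2) = (d,e)$, then $f$ admits a splitting sequence. Setting $A_i = \overline{C_i}$, each $A_i$ is a nondegenerate closed interval with $f(A_i) = [d,e]$, and $|A_1 \cap A_2| \le 1$, since the closures of disjoint open intervals share at most one boundary point. A crucial preliminary observation is that since $f(\{d, e\}) = \{d, e\}$ is disjoint from $(d, e)$, neither $d$ nor $e$ lies in any $C_i$; being a connected subset of $[0, 1] \setminus \{d, e\}$, each $C_i$ lies in exactly one of $[0, d)$, $(d, e)$, or $(e, 1]$.

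I would then split into two cases. In Case A some $C_i$, WLOG $C_1$, lies in $(d, e)$, so $A_1 \subseteq [d, e]$. I would construct the splitting sequence explicitly: take $T_0 = [d', e']$ with $d < d' < e' < e$, which is a nondegenerate proper subinterval of $[0, 1]$ avoiding $\{d, e\}$. Inductively, for $n \ge 1$, use the intermediate value theorem (since $f(A_i) = [d, e] \supseteq T_{n-1}$) to choose closed subintervals $T_n \subseteq A_1$ and $S_n \subseteq A_2$ with $f(T_n) = f(S_n) = T_{n-1}$. Because the endpoints of $T_n$ and $S_n$ map to the endpoints of $T_{n-1} \subseteq (d, e)$, while $f$ sends $\partial A_i$ into $\{d, e\}$, the endpoints of $T_n$ and $S_n$ cannot coincide with $\partial A_i$, forcing $T_n \subseteq C_1$ and $S_n \subseteq C_2$. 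Hence $T_n \cap S_n \subseteq C_1 \cap C_2 = \emptyset$ for every $n \ge 1$, yielding a splitting sequence witnessed at every step.

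In Case B neither $C_i$ lies in $(d, e)$, so $A_1, A_2 \subseteq [0, d] \cup [e, 1]$. In both the fixed-point and the $2$-cycle cases, $f([d, e])$ is a connected set containing $\{f(d), f(e)\} = \{d, e\}$, hence $f([d, e]) \supseteq [d, e]$. I would choose a closed $K = [k_1, k_2] \subseteq [d, e]$ minimal with $f(K) = [d, e]$; by minimality, $\{f(k_1), f(k_2)\} = \{d, e\}$. The Case A construction then goes through with $K$ playing the role of $A_1$, and $A_1$ itself (which lies in $[0, d) \cup (e, 1]$) playing the role of the disjoint witness: inductively $T_n \subseteq \Int(K) \subseteq (d, e)$, since preimages of points in $(d, e)$ under $f|_K$ must lie in $\Int(K)$, while $S_n \subseteq C_1$ lies outside $(d, e)$, so $T_n \cap S_n = \emptyset$. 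One may check $|K \cap A_1| \le 1$ as $K \subseteq [d, e]$ and $A_1$ lies in one of $[0, d]$ or $[e, 1]$.

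The main obstacle I anticipate is the careful boundary analysis in Case B: ensuring the minimality of $K$ really yields $f(\partial K) = \{d, e\}$ so that the induction keeps $T_n$ inside $\Int(K)$. The degenerate case $(d, e) = (0, 1)$ causes no additional difficulty, since $T_0$ is chosen strictly inside $(d, e)$. For the existence assertion (which the ``exactly one'' wording also requires), a separate argument is needed, most likely by analyzing the behavior of $f$ near the fixed points or $2$-cycle $\{d, e\}$ to produce at least one full-image component.
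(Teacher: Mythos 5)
Your uniqueness argument is essentially the paper's. The paper notes that $f(\{d,e\})=\{d,e\}$ forces $f([d,e])\supseteq[d,e]$, hence there is a component $C\subseteq[d,e]$ of $f^{-1}((d,e))$ with $f(C)=(d,e)$; given any second full-image component $D$, the pair $(\overline{C},\overline{D})$ satisfies $f(\overline{C})=f(\overline{D})=[d,e]$, $|\overline{C}\cap\overline{D}|\le 1$, and $\overline{C}$ contains a nondegenerate component of its own preimage, so Lemma \ref{lem-A-B} (with $k=1$) yields a splitting sequence. You unroll that construction by hand and introduce a case split --- according to whether one of your two given components lies in $(d,e)$ --- which the paper avoids by always anchoring the tight sequence in the inner component $C$ and using the other component only as the witness; your Case B interval $K$ is exactly that inner anchor. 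Your boundary analysis is sound: endpoints of $T_n$ map into $T_{n-1}\subseteq(d,e)$, whereas the points of $\overline{C_i}\setminus C_i$ and the points $d,e$ themselves map into $\{d,e\}$, so $T_n$ and $S_n$ cannot share a point. The one loose end is existence: ``exactly one'' also requires at least one, and you defer this to ``a separate argument'' without supplying it. In fact your own Case B construction already is that argument --- the minimal closed $K\subseteq[d,e]$ with $f(K)=[d,e]$ has $f$ mapping its endpoints onto $\{d,e\}$ and its interior into $(d,e)$, so $\mathop{\mathrm{Int}}(K)$ is a component of $f^{-1}((d,e))$ mapping onto $(d,e)$ --- so you should simply state it up front rather than leave it open.
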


\begin{proof}
Since either $d$ and $e$ are fixed points or $\{d, e\}$ is a 2-cycle, there exists a component 
$C \subseteq [d, e]$ of $f^{-1}((d, e))$ such that $f(C) = (d, e)$. Suppose that (for either 
case), $f^{-1}((d, e))$ has a second component $D$ such that $f(D) = (d, e)$. Then 
$f(\ov C) = f(\ov D) = [d, e]$, $|\ov C\cap\ov D| \le 1$ and there is a nondegenerate component 
of $f^{-1}(C)$ in $C$. Thus the pair $(\ov C, \ov D)$ generates a splitting sequence of order 1.
\end{proof}

\begin{corollary}\label{cor-fixed-points}
Let $f : [0, 1] \to [0, 1]$ be a surjective continuous function such that $f$ does not admit 
a splitting sequence. If $F$ is the set of fixed points admitted by $f$ and $d$ is an accumulation
point of $F$, then 
$$\Lf=\L([0,d],f\uhr[0,d])\cup\L([d,1],f\uhr[d,1]).$$
and $$\L([0,d],f\uhr[0,d])\cap\L([d,1],f\uhr[d,1])=\{(d,d,\ldots)\}.$$
\end{corollary}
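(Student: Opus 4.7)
The plan is to first observe that $d$ is itself a fixed point, then to show that both $[0,d]$ and $[d,1]$ are $f$-invariant, and finally to assemble the decomposition of $\Lf$ from these facts. The first step is immediate: the set $F$ is closed (as the zero set of the continuous function $x\mapsto f(x)-x$), so any accumulation point of $F$, in particular $d$, lies in $F$.

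The bulk of the work is proving the invariance $f([0,d])\subseteq[0,d]$; the statement $f([d,1])\subseteq[d,1]$ follows by a symmetric argument. Suppose for contradiction that some $x\in[0,d]$ satisfies $f(x)>d$. Then $x<d$ since $f(d)=d$. Using that $F$ accumulates at $d$, I would select a second fixed point $d'$ close to $d$ so that either $d'\in(x,d)$ (when $F$ accumulates at $d$ from below) or $d'\in(d,f(x))$ (when $F$ accumulates at $d$ only from above). In either case let $I$ denote the open interval whose endpoints are $d$ and $d'$. By Lemma \ref{lem-no-extreme-pt} at most one component of $f^{-1}(I)$ maps onto $I$; one such component sits inside the closed interval spanned by the two fixed points, produced by the IVT since $f$ fixes both endpoints. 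I then manufacture a \emph{second} component to force a contradiction: let $y$ be the maximal point in the appropriate subinterval of $[x,d]$ at which $f$ equals the larger endpoint of $I$, and let $y'$ be the minimal point beyond $y$ at which $f$ equals the smaller endpoint of $I$. Continuity together with the extremality of $y$ and $y'$ forces $f(t)\in I$ throughout $(y,y')$, and the IVT yields $f((y,y'))=I$. Thus $(y,y')$ is a second component of $f^{-1}(I)$ mapping onto $I$, disjoint from the first, contradicting Lemma \ref{lem-no-extreme-pt}.

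Given both half-intervals invariant, the decomposition follows quickly. The contrapositives $x_n>d\Rightarrow x_{n+1}>d$ and $x_n<d\Rightarrow x_{n+1}<d$ show that once a coordinate of $\Bold x\in\Lf$ strays strictly to one side of $d$, all later coordinates stay on that side; moreover, since $x_{n-1}=f(x_n)$ and each half-interval is $f$-invariant, if some $x_n$ lies in $[0,d]$ (respectively $[d,1]$) then every earlier coordinate does too. Hence each $\Bold x\in\Lf$ lies entirely in $\L([0,d],f\uhr[0,d])$ or entirely in $\L([d,1],f\uhr[d,1])$, and it lies in both only when every coordinate equals $d$, yielding the intersection $\{(d,d,\ldots)\}$. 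The main obstacle is the second-component construction in the invariance step: the argument splits according to the side from which $F$ accumulates at $d$, and one must verify carefully via continuity and the IVT that the open interval $(y,y')$ really is a component of $f^{-1}(I)$ and maps surjectively onto $I$.
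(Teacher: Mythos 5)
Your proof is correct, and it takes the route the paper clearly intends (the corollary is stated without proof, immediately after Lemma \ref{lem-no-extreme-pt}): use the accumulation of fixed points at $d$ to pick a nearby fixed point $d'$, apply Lemma \ref{lem-no-extreme-pt} to the interval between $d$ and $d'$ to rule out $f$ carrying either half-interval across $d$, and then read off the decomposition of $\Lf$ from the invariance of $[0,d]$ and $[d,1]$. The case split on the side from which $F$ accumulates, and the IVT construction of the second component of $f^{-1}(I)$, are exactly the details the paper leaves to the reader.
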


\begin{lemma}\label{2-cycle-order}
Let $f : [0, 1] \to [0, 1]$ be a surjective continuous function such that $f$ does not admit a 
splitting sequence. If $f$ admits two 2-cycles $\{s, t\}$ and $\{u, v\}$ with $s < t$ and $u < v$, 
then either $s < u < v < t$ or $u < s < t < v$.
\end{lemma}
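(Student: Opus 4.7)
The strategy is to argue by contradiction. Since the two 2-cycles are disjoint, $s \neq u$; and since the statement is symmetric in the two cycles, I may assume $s < u$. The conclusion then reduces to proving $v < t$, so I suppose for contradiction that $t < v$ (equality is ruled out by disjointness). Combined with $s < u$, $s < t$, $u < v$, the relative order of $t$ and $u$ leaves exactly two possibilities: (a) $s < t < u < v$, or (b) $s < u < t < v$. In each case I will exhibit intervals $A, B$ and an integer $k$ to which Lemma \ref{lem-A-B} applies, producing a splitting sequence and the desired contradiction.

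For case (a), the relations $f(s) = t$, $f(t) = s$ yield, via the intermediate value theorem, a closed subinterval $A \subseteq [s,t]$ with $f(A) = [s,t]$, and $f(t) = s$, $f(u) = v$ give $f([t,u]) \supseteq [s,v] \supseteq [s,t]$, so IVT also supplies a closed $B \subseteq [t,u]$ with $f(B) = [s,t]$. Then $f(A) = f(B)$, $A \cap B \subseteq \{t\}$, and since $f(A) = [s,t] \supseteq A$, one more IVT step produces a nondegenerate closed $A' \subseteq A$ with $f(A') = A$, furnishing a nondegenerate component of $f^{-1}(A)$ inside $A$. Lemma \ref{lem-A-B} with $k = 1$ now yields a splitting sequence.

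For case (b), the relations $f(s) = t$, $f(u) = v$ and $f(u) = v$, $f(t) = s$ yield, by IVT, closed subintervals $A \subseteq [s,u]$ and $B \subseteq [u,t]$ with $f(A) = f(B) = [t,v]$ and $A \cap B \subseteq \{u\}$. The difference from case (a) is that $A$ and $f(A) = [t,v]$ are now disjoint, so a single application of $f$ does not return to $A$; however $f(t) = s$, $f(v) = u$ give $f^2(A) = f([t,v]) \supseteq [s,u] \supseteq A$, and IVT inside $A$ produces a nondegenerate $A' \subseteq A$ with $f^2(A') = A$. Lemma \ref{lem-A-B} with $k = 2$ then applies. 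The main subtlety throughout is the choice of target interval and exponent $k$: the target $[s,t]$ with $k = 1$ in the essentially disjoint configuration (a), versus the target $[t,v]$ with $k = 2$ in the interlocking configuration (b), where $A$ sits on one side of $[u,t]$ while $f(A)$ sits on the other, forcing a two-step return to close the loop.
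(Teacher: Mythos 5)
Your proof is correct and follows essentially the same route as the paper's: reduce to the ``interleaved'' or ``stacked'' orderings and apply Lemma \ref{lem-A-B}, with your case (b) ($s<u<t<v$, target $[t,v]$, order $2$) being verbatim the paper's main case and your case (a) plus the symmetry reduction filling in what the paper dismisses with ``similarly.'' The only cosmetic imprecision is calling $A'$ a ``component of $f^{-1}(A)$ in $A$'' when it is really a subinterval of $A$ mapping onto $A$, but that is exactly the form of the hypothesis the proof of Lemma \ref{lem-A-B} actually uses, and the paper itself glosses it the same way.
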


\begin{proof}
Suppose $s < u < t < v$. Then there are closed intervals $A \se [s, u]$ and $B \subset [u, t]$ 
such that $f(A) = f(B) = [t, v]$. Also, there is an interval $A' \subset [t, v]$ such that 
$f(A') = [s, u]$. Thus $(A, B)$ generates a splitting sequence of order 2. Similarly if 
$s < t < u < v$, $u < s < v < t$, or $u < v < s < t$.
\end{proof}

\begin{lemma}\label{lem-one-2-cycle}
Let $f : [0, 1] \to [0, 1]$ be a surjective continuous function such that $f$ does not admit a 
splitting sequence. If $f$ admits two 2-cycles $\{s, t\}$ and $\{u, v\}$ with $s < u$, then 
there is exactly one component $C$ of $f^{-1}([s, u])$ such that $f(C) = [s, u]$, and
 there is exactly one component $C'$ of $f^{-1}([v, t])$ such that $f(C') = [v, t]$.
\end{lemma}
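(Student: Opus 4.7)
The plan is to invoke Lemma~\ref{2-cycle-order} to conclude $s<u<v<t$, to produce $C$ and $C'$ via the intermediate value theorem, and then to deduce uniqueness of each by showing that a second component forces a splitting sequence.

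For existence of $C$, I would take the rightmost $p\in[v,t]$ with $f(p)=u$ and the leftmost $q\in[p,t]$ with $f(q)=s$. Maximality/minimality and continuity force $f((p,q))\subseteq(s,u)$, hence $f([p,q])=[s,u]$ and $[p,q]\subseteq f^{-1}([s,u])$; the component $C$ of $f^{-1}([s,u])$ containing $[p,q]$ then satisfies $f(C)=[s,u]$. The symmetric construction on $[s,u]$, using $f(s)=t$ and $f(u)=v$, produces a subinterval $[p',q']\subseteq[s,u]$ with $f([p',q'])=[v,t]$ and a component $C'$ with $f(C')=[v,t]$.

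For uniqueness of $C$, suppose distinct components $C_1\ne C_2$ of $f^{-1}([s,u])$ both satisfy $f(C_i)=[s,u]$. After possibly relabeling (if $[p,q]$ lies in a third component, I replace one of $C_1,C_2$ by it), I may assume $[p,q]\subseteq C_1$; then $C_1\cap C_2=\emptyset$. I would define $T_0=[s,u]$, $T_1=[p,q]$, and inductively for $k\ge 1$ take $T_{2k}\subseteq[s,u]$ with $f(T_{2k})=T_{2k-1}$ (possible since $T_{2k-1}\subseteq[p,q]\subseteq[v,t]\subseteq f([s,u])$) and $T_{2k+1}\subseteq[p,q]$ with $f(T_{2k+1})=T_{2k}$ (possible since $T_{2k}\subseteq[s,u]=f([p,q])$). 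At each odd level I would pick $S_{2k+1}\subseteq C_2$ with $f(S_{2k+1})=T_{2k}$, available because $f(C_2)=[s,u]\supseteq T_{2k}$. Each $T_n$ is a nondegenerate proper closed subinterval of $[0,1]$, $f(T_{n+1})=T_n$, and $T_{2k+1}\cap S_{2k+1}\subseteq C_1\cap C_2=\emptyset$, so $\langle T_n\rangle$ is a splitting sequence witnessed by $\{S_{2k+1}:k\ge 0\}$, contradicting the hypothesis on $f$. The uniqueness of $C'$ then follows by the analogous argument with the roles of $[s,u]$ and $[v,t]$ exchanged.

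The main subtlety I expect is the choice to anchor $T_{2k+1}$ inside the subinterval $[p,q]\subseteq[v,t]$ rather than in all of $C_1$: $C_1$ may extend outside $f([s,u])$, in which case placing $T_{2k+2}$ inside $[s,u]$ would be impossible and the alternation would collapse. The alternation between $[s,u]$ and $[p,q]\subseteq[v,t]$ exploits the structural fact $[v,t]\subseteq f([s,u])$ built into the 2-cycle configuration, while the disjoint component $C_2$ supplies a witness at every odd index.
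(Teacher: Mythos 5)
Your proof is correct and follows essentially the same route as the paper: the paper obtains uniqueness by noting that a second component $D$ would make the pair $(C,D)$ generate a splitting sequence of order $2$ via Lemma \ref{lem-A-B} (using that $f^{-2}(C)$ has a nondegenerate component in $C\subseteq[v,t]$), and your alternating construction of the $T_n$ between $[s,u]$ and $[p,q]\subseteq[v,t]$ with witnesses $S_{2k+1}\subseteq C_2$ is precisely that order-$2$ splitting sequence written out by hand. The only addition is that you make the existence half explicit via the intermediate value theorem, which the paper leaves implicit.
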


\begin{proof}
Let $C \subseteq [v, t]$ be a component of $f^{-1}([s, u])$ such that $f(C) = [s, u]$. Since
$f([s, u]) \supseteq [v, t]$, we can choose a nondegenerte component of $f^{-2}(C)$ in $C$.
If $f^{-1}([s, u])$ has a second component $D$ such that $f(D) = [s, u]$ and $|C \cap D| \le 1$, 
the pair $(C, D)$ generates a splitting sequence of order 2. The proof of the second statement is 
analogous.
\end{proof}

\begin{lemma}\label{lem-f-2}
Let $f : [0, 1] \to [0, 1]$ be a surjective continuous function. Then $f$ admits a splitting 
sequence if and only if $f^2$ admits a splitting sequence. 
\end{lemma}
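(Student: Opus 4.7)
The plan splits into two directions. The forward direction ($f$-splitting implies $f^2$-splitting) is essentially routine: given $\si = \ls T_n : n \in \N \rs$ witnessed by $\{S_n : n \in N\}$, the set $N$ contains infinitely many indices of a common parity; WLOG infinitely many are even. Then $\tau = \ls T_{2n} : n \in \N \rs$ is tight for $f^2$ since $f^2(T_{2n+2}) = f(T_{2n+1}) = T_{2n}$, and for each even $n \in N$ the same $S_n$ witnesses splitting for $f^2$: $|S_n \cap T_n| \le 1$ is unchanged and $f(S_n) = f(T_n)$ yields $f^2(S_n) = f^2(T_n)$.

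For the converse I would ``double'' an $f^2$-splitting sequence into an $f$-splitting sequence. Given $\tau = \ls T_n : n \in \N \rs$ witnessed by $\{S_n : n \in N\}$, I define $\tilde\tau = \ls \tilde T_k : k \in \N \rs$ by $\tilde T_{2n} = T_n$ and $\tilde T_{2n+1} = f(T_{n+1})$. A direct check gives $f(\tilde T_{k+1}) = \tilde T_k$ for all $k$, and each $\tilde T_{2n+1}$ is nondegenerate once $T_n$ is (otherwise $f$ would send the singleton $f(T_{n+1})$ onto the nondegenerate $T_n$ via $f^2(T_{n+1}) = T_n$). Hence $\tilde\tau$ is tight for $f$.

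For witnesses, fix $n \in N$ and set $A = f(T_n)$, $B = f(S_n)$; both satisfy $f(A) = f(B) = T_{n-1}$. Three cases arise. If $A = B$, then $S_n$ itself witnesses splitting at index $2n$ of $\tilde\tau$, since $f(S_n) = f(\tilde T_{2n})$ and $|S_n \cap T_n| \le 1$. If $A \neq B$ with $|A \cap B| \le 1$, then $B$ witnesses at index $2n - 1$, since $f(B) = f(A) = f(\tilde T_{2n-1})$ and $|B \cap \tilde T_{2n-1}| \le 1$. The main obstacle is the remaining case, where $A \neq B$ and $A \cap B$ is a nondegenerate subinterval. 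My plan for this case is an intermediate-value extraction: when, say, $A = [a_1, a_2] \subsetneq B$, pick $p, q \in S_n$ with $f(p) = a_1$, $f(q) = a_2$ (WLOG $p < q$), and let $p^* = \sup\{x \in [p,q] : f(x) = a_1\}$, $q^* = \inf\{x \in [p^*, q] : f(x) = a_2\}$. By continuity and the choice of $p^*, q^*$, $f$ stays within $[a_1, a_2] = A$ on $[p^*, q^*]$ with $f(p^*) = a_1$ and $f(q^*) = a_2$, so $f([p^*, q^*]) = A$, and $S'_n = [p^*, q^*] \se S_n$ is a valid witness at index $2n$. Symmetric or more delicate arguments, possibly involving the component structure of $f^{-1}(A)$ and $f^{-1}(T_{n-1})$, handle the remaining overlap sub-cases. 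The hard part is ensuring that such a modified witness can be produced at infinitely many $n \in N$, so that $\tilde\tau$ is indeed an $f$-splitting sequence.
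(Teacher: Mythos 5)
Your forward direction is correct and is essentially identical to the paper's: split $N$ by parity, keep the parity that occurs infinitely often, and reuse the same witnesses, noting that $f(S_n)=f(T_n)$ implies $f^2(S_n)=f^2(T_n)$.

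For the converse you have correctly isolated the real difficulty, but you have not resolved it, and the unresolved configuration is not a fringe case. With $A=f(T_n)$ and $B=f(S_n)$, the hypothesis gives only $f(A)=f(B)=T_{n-1}$, and your two witness placements cover exactly the configurations $A\subseteq B$ (shrink $S_n$ to $S_n'$ with $f(S_n')=A$; your intermediate-value extraction of $[p^*,q^*]$ is sound) and $|A\cap B|\le 1$ (use $B$ at index $2n-1$). The remaining configuration --- $A\cap B$ nondegenerate but $A\not\subseteq B$, e.g.\ $B\subsetneq A$ --- defeats both placements at once: since $A\not\subseteq B$ no subinterval of $S_n$ maps onto $A=f(\tilde T_{2n})$, so no witness at index $2n$ can be drawn from $S_n$, while $B$ meets $\tilde T_{2n-1}=A$ in a nondegenerate set, so neither $B$ nor (in general) any subinterval of $B$ mapping onto $T_{n-1}$ serves at index $2n-1$. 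The natural escape is to shrink $\tilde T_{2n}$ to a subinterval $T'$ with $f(T')=B$, making $S_n$ itself a witness; but this changes $\tilde T_{2n+1},\tilde T_{2n+2},\dots$ and forces you to re-establish the witness condition at every higher level against the new, smaller intervals, and your proposal supplies no mechanism to control that cascade. (Your closing worry about achieving infinitely many witnesses is, by contrast, harmless: one of your finitely many configurations recurs infinitely often by pigeonhole, so the real issue is only the unhandled configuration.) For comparison, the paper's proof of this direction simply declares $S'_{2n}=S_n$ to be a witness at level $2n$ of the doubled sequence, which presupposes $f(S_n)=f(R_n)$ rather than the hypothesis $f^2(S_n)=f^2(R_n)$; so the step you are wrestling with is precisely the step the paper leaves implicit, and your proposal as written does not close it.
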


\begin{proof}
Let $N \subset \N$ be an infinite set, $\ls T_n : n \in \N \rs$ a splitting sequence admitted by 
$f$ and  witnessed by $\{ S_n : n \in N \}$. Let  $\si = \ls T_{2n} : n \in \N \rs$ and let 
$\tau = \ls T_{2n+1} : n \in \N \rs$. Observe that either the set of even values in $N$ is infinite, 
or the set of odd values is. If the even values are infinite then $\si$ is a splitting sequence 
admitted by $f^2$ and witnessed by $\{ S_n : n \in N, \ n \textrm{ is even} \}$. If the set of 
odd values of $N$ is infinite then the $\tau$ is a splitting sequence admitted by $f^2$ and witnessed 
by $\{ S_n : n \in N, \ n \textrm{ is odd} \}$.
 
Suppose $\ls R_n : n \in \N \rs$ is a splitting sequence admitted by $f^2$ and  witnessed by 
$\{ S_n : n \in N \}$ for some infinite set $N$. For each $n$ let $T_{2n} = R_n$ and 
$T_{2n+1} = f(R_{n+1})$. For each $n \in N$ let $S'_{2n} = S_n$. Then $\ls T_n : n \in \N \rs$ is a 
splitting sequence admitted by $f$ and witnessed by $\{ S'_{2n} : n \in N \}$. 
\end{proof}

For the remainder of this paper, given a function $f : [0,1] \to [0,1]$, let $a = \max(f^{-1}(0))$ and $b = \min(f^{-1}(1))$.
 
\begin{lemma}\label{lem-claim1}
Let $f : [0,1] \to [0,1]$ be a surjective continuous function that does not admit a splitting sequence.
Let $d$ be the maximum fixed point of $f$. Suppose $a < b$. Then the following hold:
\begin{enumerate}[(i)]
\item $d$ is the only fixed point  in $[b, 1]$;
	
\item $f([b, 1]) \subset (b, 1]$;
	
\item $f \uhr [b, 1]$ does not admit a 2-cycle; and
	
\item $\L([b, 1], f\uhr[b,1]) = \{( d, d, \ldots )\}$.
\end{enumerate}
\end{lemma}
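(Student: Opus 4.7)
My plan is to prove (i), (ii), and (iii) each by contradiction, producing in each case a splitting sequence via Lemma \ref{lem-no-extreme-pt} or Lemma \ref{lem-A-B}, and then deduce (iv) by applying Proposition \ref{BS-prop} to the restriction $f\uhr[b,1]$. The case $b=1$ forces $d=1$ and makes all four claims trivial, so I assume $b<1$; then $f(a)=0$, $f(b)=1$ with $a<b$ give $f([a,b])=[0,1]$ by the IVT, and $d>b$ because $b$ is not fixed.

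For (i), suppose for contradiction that $f$ has a fixed point in $[b,d)$, and let $d'$ be the largest such; since $f(b)=1\neq b$, $d'>b$, and by maximality of $d'$ there are no fixed points of $f$ in $(d',d)$. Apply Lemma \ref{lem-no-extreme-pt} to the pair $d'<d$: $f^{-1}((d',d))$ has exactly one component mapping onto $(d',d)$. I will exhibit two. A first component lies in $(b,d')$: setting $q_1=\sup\{x\in[b,d']:f(x)\geq d\}$ and $q_2=\min\{x\in[q_1,d']:f(x)\leq d'\}$, continuity and extremality force $f(q_1)=d$, $f(q_2)=d'$, and $f\in(d',d)$ on $(q_1,q_2)$, so $(q_1,q_2)$ is a component with image $(d',d)$. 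A second component lies in $[d',d]$: since $f(x)-x$ has no zero on $(d',d)$, either $f>x$ or $f<x$ there, and in each case a short analysis (depending on whether $f$ attains a value $\geq d$ or $\leq d'$ in $(d',d)$) exhibits a component of the form $(d',d)$, $(d',r)$, or $(s,d)$, each with $f$-image $(d',d)$. Two distinct components contradict Lemma \ref{lem-no-extreme-pt}.

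For (ii), suppose $f(x_0)\leq b$ for some $x_0\in(b,1]$. Let $y=\min(f^{-1}(b)\cap(b,1])$ (existing by the IVT on $[b,x_0]$) and $z=\max(f^{-1}(b)\cap[a,b])$ (existing because $f([a,b])=[0,1]$). Extremality combined with $f(b)=1$ forces $f>b$ on $(z,b]$ and on $(b,y)$, whence $f([z,b])=f([b,y])=[b,1]$. Setting $A=[b,y]$ and $B=[z,b]$, one has $A\cap B=\{b\}$, $f(A)=f(B)=[b,1]$, and since $A\subseteq f(A)$, a closed subinterval of $A$ has $f$-image $A$. Lemma \ref{lem-A-B} yields a splitting sequence of order $1$, a contradiction.

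For (iii), assume a 2-cycle $\{u,v\}$ of $f$ with $u<v$ in $[b,1]$. By (ii), $u,v\in(b,1]$; by (i) and the IVT on $[u,v]$ (using $f(u)-u>0$, $f(v)-v<0$) the unique fixed point $d$ satisfies $u<d<v$. From $f(u)=v$, $f(d)=d$ the IVT on $[u,d]$ yields $f([u,d])\supseteq[d,v]$, so I choose a closed subinterval $A\subseteq[u,d]$ with $f(A)=[d,v]$; and because $f([a,b])=[0,1]\supseteq[d,v]$, I likewise choose a closed subinterval $B\subseteq[a,b]$ with $f(B)=[d,v]$. Since $A\subseteq(b,1]$ and $B\subseteq[a,b]$, $A\cap B=\emptyset$, and using $f(d)=d$, $f(v)=u$, the IVT gives $f^2(A)=f([d,v])\supseteq[u,d]\supseteq A$, so Lemma \ref{lem-A-B} with $k=2$ produces a splitting sequence. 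Finally, for (iv), set $g=f\uhr[b,1]$; by (ii) this is a continuous self-map of $[b,1]$. By (i), (iii), and Lemma \ref{lem-2cyclesonly} applied to $f$ (any periodic orbit of $g$ is one of $f$), $g$ has the unique fixed point $d$ and no other periodic points. Conjugating by a linear rescaling of $[b,1]$ to $[0,1]$ and invoking Proposition \ref{BS-prop}, $\L([b,1],g)$ is a single point, which must be $(d,d,\ldots)$. The most delicate step is the case analysis in (i) producing the component of $f^{-1}((d',d))$ inside $[d',d]$.
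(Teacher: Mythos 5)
Your overall strategy is the same as the paper's: derive splitting sequences by contradiction in (i)--(iii) via Lemmas \ref{lem-no-extreme-pt} and \ref{lem-A-B}, then obtain (iv) from (i), (iii), Lemma \ref{lem-2cyclesonly} and Proposition \ref{BS-prop}. Parts (ii), (iii) and (iv) are correct and essentially match the paper; for (iii) the paper argues directly through Lemma \ref{lem-no-extreme-pt} with the second interval taken inside $[a,b]$ (where $f([a,b])=[0,1]$), while your order-$2$ application of Lemma \ref{lem-A-B} to $[d,v]$ works equally well.

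The one step that does not go through as written is in (i): you take $d'$ to be the \emph{largest} fixed point in $[b,d)$ and then lean on there being no fixed points in $(d',d)$. A priori the fixed points of $f$ in $[b,d)$ could accumulate at $d$ from below (nothing proved up to this point excludes, say, a Cantor set of fixed points in $[b,1]$), in which case no largest element exists and the case analysis you run on $(d',d)$ has nothing to stand on. The repair is cheap, and it is what the paper implicitly does: for \emph{any} pair of fixed points $d'<d$ one has $f([d',d])\supseteq[d',d]$, so a component of $f^{-1}((d',d))$ contained in $[d',d]$ with image $(d',d)$ always exists --- this is exactly the first sentence of the proof of Lemma \ref{lem-no-extreme-pt} and requires no hypothesis about fixed points in between. (The paper also places its second interval in $[0,b]$ using $f([a,b])=[0,1]$ rather than in $(b,d')$ as you do; your $(q_1,q_2)$ construction there is fine.) With that substitution your proof of (i), and hence of the whole lemma, is correct.
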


\begin{proof} Observe that if $f(1) = 1$ then $d=1$.
\begin{enumerate}[(i)]
\item Suppose $d' \in [b, 1]$, $d'$ is a fixed point and $d' < d$.  Then there are intervals $A \se [d', d]$ and $B \subset [0, b]$ such that 
	$f(A) = f(B) = [d', d]$, contradicting Lemma \ref{lem-no-extreme-pt}.

\item If $b \in f([b, 1])$ then there exist $A \subseteq [b, 1]$ and $B \subseteq [a, b]$ such that
	$f(A) = f(B) = [b, 1]$. Since $f^{-1}([b, 1]) \supseteq [b, 1]$, by Lemma \ref{lem-A-B} $f$ admits
	a splitting sequence, a contradiction.
	
\item The statement follows from Lemma \ref{lem-no-extreme-pt} since if  $\{ p, q \}$ is a 2-cycle 
	admitted by $f \uhr [b, 1]$, $p < q$, then there are  intervals $A \se [p, q]$ and 
	$B \subset [0, b]$ such that $f(A) = f(B) = [p, q]$. 

\item By (i), (iii), Proposition \ref{BS-prop} and Lemma \ref{lem-2cyclesonly}, 
	$\L([b, 1], f \uhr [b, 1])$ is a singleton, and as $d$ is a fixed point, 
	$ \L([b, 1], f \uhr [b, 1])=\{(d, d, \ldots)\}$. 
\end{enumerate}
\end{proof}

Analogously to Lemma \ref{lem-claim1} we can show the following:

\begin{lemma}\label{lem-claim2}
	Let $f : [0,1] \to [0,1]$ be a surjective continuous function that does not admit a splitting 
	sequence. Let $e$ be the minimum fixed point of $f$. Suppose $a < b$. Then the following hold:
	\begin{enumerate}[(i)]
	\item $e$ is the only fixed point in $[0,a]$;
	
	\item $f([0,a])\subset[0,a)$;
	
	\item $f\uhr[0,a]$ does not admit a 2-cycle; and
	
	\item $\L([0, a], f \uhr [0, a]) = \{ (e, e, \ldots) \}$.
	\end{enumerate}
\end{lemma}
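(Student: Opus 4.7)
The plan is to mirror the proof of Lemma \ref{lem-claim1} with the roles of the left and right ends of $[0,1]$ interchanged: the minimum fixed point $e$, the interval $[0,a]$, and the value $f(a)=0$ here play the parts of $d$, $[b,1]$, and $f(b)=1$ there. The core geometric input is that $f(a)=0$ and $f(b)=1$ force $f([a,b])\supseteq[0,1]$, so every nondegenerate closed subinterval $I$ of $[0,1]$ has a subinterval $B\subseteq[a,b]$ with $f(B)=I$; this furnishes the ``other'' preimage interval needed to invoke Lemma \ref{lem-no-extreme-pt} or Lemma \ref{lem-A-B}.

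For (i), assume $e'\in[0,a]$ is a second fixed point; minimality of $e$ forces $e<e'$. Since $f(e)=e$ and $f(e')=e'$, we have $f([e,e'])\supseteq[e,e']$ and hence a subinterval $A\subseteq[e,e']$ with $f(A)=[e,e']$. Pairing $A$ with $B\subseteq[a,b]$ satisfying $f(B)=[e,e']$, we obtain $|A\cap B|\le 1$, so $A$ and $B$ each contain a component of $f^{-1}((e,e'))$ mapping onto $(e,e')$; these components are distinct because $f(a)=0\notin(e,e')$, contradicting Lemma \ref{lem-no-extreme-pt}.

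For (ii), suppose for contradiction that $a\in f([0,a])$; pick $y\in[0,a)$ with $f(y)=a$, so that $f([y,a])\supseteq[0,a]$ and there is $A\subseteq[0,a]$ with $f(A)=[0,a]$. Combined with $B\subseteq[a,b]$ satisfying $f(B)=[0,a]$, we again have $|A\cap B|\le 1$; since $f(A)=[0,a]\supseteq A$ there is a nondegenerate subinterval of $A$ mapping onto $A$, i.e.\ a nondegenerate component of $f^{-1}(A)$ in $A$, so Lemma \ref{lem-A-B} produces a splitting sequence of order $1$, a contradiction. Connectedness of $f([0,a])$ together with $0\in f([0,a])$ and $a\notin f([0,a])$ then yields $f([0,a])\subset[0,a)$. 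For (iii), any $2$-cycle $\{p,q\}\subseteq[0,a]$ with $p<q$ satisfies $f([p,q])\supseteq[p,q]$, so the same pairing of $A\subseteq[p,q]$ with $B\subseteq[a,b]$ such that $f(A)=f(B)=[p,q]$ again contradicts Lemma \ref{lem-no-extreme-pt}.

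Finally, for (iv), (ii) ensures $\L([0,a],f\uhr[0,a])$ is well defined; by (i) its bonding map has a unique fixed point $e$, by (iii) no $2$-cycle, and no periodic point of period greater than $2$ (since such a point for $f\uhr[0,a]$ would also be one for $f$, contradicting Lemma \ref{lem-2cyclesonly}). Proposition \ref{BS-prop} then collapses $\L([0,a],f\uhr[0,a])$ to a single point, necessarily $(e,e,\ldots)$. The main point requiring care is the boundary-overlap case in (i)--(iii) where the constructed $A$ and $B$ share the endpoint $a$; this is harmless because $f(a)=0$ lies outside each of the relevant open intervals, so the two open preimage components remain disjoint.
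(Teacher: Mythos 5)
Your proof is correct and is exactly the argument the paper intends: the paper gives no separate proof of Lemma \ref{lem-claim2}, stating only that it is ``analogous'' to Lemma \ref{lem-claim1}, and your write-up is that mirrored argument --- using $f([a,b])=[0,1]$ to supply the second preimage interval $B$, Lemma \ref{lem-no-extreme-pt} for (i) and (iii), Lemma \ref{lem-A-B} for (ii), and Proposition \ref{BS-prop} together with Lemma \ref{lem-2cyclesonly} for (iv). Your closing observation that the possible shared endpoint $a$ is harmless because $f(a)=0$ lies outside the relevant open intervals is the same (implicit) step underlying the paper's proof of Lemma \ref{lem-claim1}.
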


\begin{lemma}\label{lem-claim3}
Let $f : [0,1] \to [0,1]$ be a surjective continuous function that does not admit a splitting sequence.
Suppose $b < a$. Let $d$ be a fixed point between $b$ and $a$. Let $a' = \min(f^{-1}(0))$, 
$b' = \max(f^{-1}(1))$, and let 
$$r = \max \{ x \in (a', b') : f(x) \in (a', b') \textrm{ and } x \textrm{\ is periodic with } \Per(x) \le 2\}.$$ 
Then the following hold:
\begin{enumerate}[(i)]
\item for every $x \in [0, b']$, $f(x) > r$ and for every $x \in [a', 1]$, $f(x) < f(r)$;
		
\item the function $f$ admits exactly one fixed point; and
		
\item $f$ admits a unique 2-cycle $\{s, t\}$ such that  $s < t$, and either $0 \le s < b'$ or $a' < t \le 1$.
\end{enumerate}
\end{lemma}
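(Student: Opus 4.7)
I would prove the three conclusions in the order (ii), (iii), (i), since each uses facts from the preceding parts.

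For (ii), suppose for contradiction that $d_1 < d_2$ are two distinct fixed points of $f$. By Lemma \ref{lem-no-extreme-pt}, $f^{-1}((d_1, d_2))$ has exactly one component mapping onto $(d_1, d_2)$, so it suffices to exhibit two disjoint components. Since $f(b) = 1 > d_2$ and $f(a) = 0 < d_1$ with $b < a$, I apply the intermediate value theorem on an interval near $b$ (where $f$ descends past $d_2$ toward $d_1$) to produce one component, and on a separated interval near $a$ (where $f$ descends past $d_2$ toward $0$) to produce a second component disjoint from the first. A short case analysis on the positions of $d_1, d_2$ relative to $b, a$ handles the configurations, using the given fixed point $d \in (b, a)$ as one of $d_1, d_2$.

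For (iii), I first establish existence of a 2-cycle using $h = f^2 - \mathrm{id}$. Since $f([b, d]) \supseteq [d, 1]$, IVT yields $y \in (b, d)$ with $f(y) = a$, hence $h(y) = -y < 0$. Symmetrically there is $z \in (d, a)$ with $f(z) = b$, giving $h(z) = 1 - z > 0$. Combined with $h(d) = 0$ and the boundary values $h(0), h(b), h(a), h(1)$, a sign-based case analysis always produces a zero of $h$ strictly separated from $d$; by (ii), this zero lies in a 2-cycle. For uniqueness, suppose $\{s_1, t_1\}$ and $\{s_2, t_2\}$ are two distinct 2-cycles; Lemma \ref{2-cycle-order} nests them strictly, WLOG $s_1 < s_2 < t_2 < t_1$, and Lemma \ref{lem-one-2-cycle} says $f^{-1}([s_1, s_2])$ has a unique component mapping onto $[s_1, s_2]$, but $f(b) = 1, f(a) = 0$ force an extra such component via IVT, a contradiction. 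For the ordering property, if $b' \le s < t \le a'$, the 2-cycle is trapped in $[b', a']$; I construct disjoint $A, B$ with $f(A) = f(B)$ and a nondegenerate component of $f^{-k}(A)$ inside $A$ using $f(b') = 1, f(a') = 0$ and the 2-cycle dynamics, so that Lemma \ref{lem-A-B} gives a splitting sequence, a contradiction.

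For (i), suppose there is $x \in [0, b']$ with $f(x) \le r$. Since $f(b') = 1 > r$, IVT produces $y \in [x, b')$ with $f(y) = r$. Because $r$ has $\Per(r) \le 2$ with its entire orbit in $(a', b')$, the point $r$ itself (or its partner in a 2-cycle) provides another preimage of $r$ (or of $f(r)$) distinct from $y$ and contained in $(a', b')$. I then choose $A$ to contain the orbit of $r$ in $(a', b')$ and $B$ to contain $y$, with $f(A) = f(B)$ and $|A \cap B| \le 1$; the periodicity of $r$ ensures a nondegenerate component of $f^{-k}(A)$ inside $A$ (with $k$ equal to the period of $r$). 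Lemma \ref{lem-A-B} then yields a splitting sequence, contradicting the hypothesis. The symmetric claim $f([a', 1]) \subset [0, f(r))$ follows analogously using $f(a') = 0 < f(r)$. The main obstacles I anticipate are (a) in (iii) existence, guaranteeing the zero of $h$ strictly avoids $d$, which needs careful case analysis on $h$'s signs at the boundary; and (b) in (i), satisfying the ``nondegenerate component of $f^{-k}(A)$ inside $A$'' hypothesis of Lemma \ref{lem-A-B}, which hinges on the period-$\le 2$ structure and the maximality of $r$ in $(a', b')$.
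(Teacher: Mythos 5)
Your reordering (ii)$\to$(iii)$\to$(i) creates a genuine gap, because in the paper part (i) is the engine: it is the statement that pins \emph{all} periodic points of period $\le 2$ against the structure of $f$ on $[0,b']$ and $[a',1]$, and parts (ii) and (iii) are then read off from it. Your part (ii) does survive the reordering (the case analysis on the positions of $d_1,d_2$ relative to $b,a$, together with the component of $f^{-1}((d_1,d_2))$ inside $[d_1,d_2]$, can be made to work), and your existence argument for a $2$-cycle via $f^2-\mathrm{id}$ is fine. But the uniqueness step fails as stated: if both $2$-cycles are nested strictly inside $(b',a')$, say $b'<s_1<s_2<d<t_2<t_1<a'$, then the component of $f^{-1}([s_1,s_2])$ supplied by IVT on $[b,a]$ (from $f(b)=1$, $f(a)=0$) need not be distinct from the one in $[t_2,t_1]$ --- nothing forces $f$ to re-enter $[s_1,s_2]$ outside $[t_2,t_1]$, so no second component is produced and Lemma \ref{lem-one-2-cycle} gives no contradiction. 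To exclude that configuration you must first prove that a $2$-cycle cannot be trapped in $(b',a')$, i.e.\ the third clause of (iii) --- and that is exactly the step you leave unspecified (``I construct disjoint $A,B$ \dots using the $2$-cycle dynamics''). The natural candidates do not work there: e.g.\ $A\se[s,t]$ with $f(A)=[s,t]$ has a nondegenerate component of $f^{-1}(A)$ in $A$, but a second interval $B$ with $f(B)=[s,t]$ exists only if $f$ dips below $s$ on $[0,b']$ or rises above $t$ on $[a',1]$, which is precisely the content of (i). So the argument is circular in the one place where real work is needed.

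The same problem surfaces in your sketch of (i). If $A$ ``contains the orbit of $r$'', then $f(A)\supseteq[f(r),r]$, whereas all you know about the region near $y$ is $f(x)\le r$, $f(y)=r$ and $f(b')=1$; that lets $B\se[x,b']$ cover $[r,1]$ but gives no control below $r$, so you cannot arrange $f(B)=f(A)$. The paper's choice is different and is the one that closes: take $A$ in the interval between $f(r)$ and $b'$ with $f(A)=[r,1]$ (using $f(f(r))=r$, $f(b')=1$), take $B\se[x,b']$ with $f(B)=[r,1]$ (using $f(x)\le r$), note $A\cap B\se\{b'\}$, and obtain the return trip of Lemma \ref{lem-A-B} with $k=2$ via an interval $A'\se[r,1]=f(A)$ satisfying $f(A')=[b',f(r)]\supseteq A$. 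I would recommend restoring the paper's order --- prove (i) first with this construction, then deduce $b'<d<a'$ and the location of all $2$-cycles from it --- rather than trying to repair the uniqueness and trapping arguments independently.
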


\begin{proof}
\begin{enumerate}[(i)]
\item 
If there exists $x \in [0, b']$ such that $f(x) \le r$, then there are closed intervals $A \se [b',f(r)]$ 
and $B \subset [x,b']$ such that $f(A) = f(B) = [r, 1]$. There is an interval $A' \subset [r, 1]$ such that 
$f(A') = [b', f(r)]$. Thus $(A, B)$ generates a splitting sequence of order 2.

Analogously, if there exists $x \in [a', 1]$ such that $f(x)\ge f(r)$, then we can obtain a splitting sequence 
of order 2.

\item
Suppose $f$ admits a second fixed point $e$ and $d < e$. Then by (i), $b' < d < e < a'$, and $f^{-1}([d, e])$ 
has a component $C \subset [b', d]$ and a component $D \se [d, e]$ such that $f(C) = f(D) = [d, e]$, 
contradicting Lemma \ref{lem-no-extreme-pt}.

\item
Since $f([b', a']) = [0, 1]$, the claim follows from Lemmas \ref{lem-no-extreme-pt}, \ref{2-cycle-order} 
and \ref{lem-one-2-cycle}.
\end{enumerate}
\end{proof}

\begin{proposition}\label{lem-endpoints}
If $f : [0,1] \to [0,1]$ is a surjective continuous function that does not admit a splitting 
sequence, then  either
\begin{enumerate}
\item[(a)]
$f$ admits at least 2 fixed points and if $d$ is the maximum and $e$ the minimum fixed point, 
then $(d, d, d, \ldots)$ and $(e, e, e,\ldots)$ are endpoints of $\Lf$; or
\item[(b)]
$f$ admits a 2-cycle and if $\{s, t\}$ is a 2-cycle such that for any other 2-cycle $\{u, v\}$, 
$s < u$, 
then $(s, t, s, t, \ldots)$ and $(t, s, t, s,\ldots)$ are endpoints of $\Lf$.
\end{enumerate}
\end{proposition}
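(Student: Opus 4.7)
The plan is to split on the order of $a = \max f^{-1}(0)$ and $b = \min f^{-1}(1)$; these cannot coincide because $f(a) = 0 \neq 1 = f(b)$. If $a < b$ I verify (a), and if $b < a$ I verify (b). In both cases the endpoint property is certified using the Barge--Martin criterion, Theorem \ref{marcy}.

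In the case $a < b$, Lemmas \ref{lem-claim1} and \ref{lem-claim2} supply a maximum fixed point $d \in [b, 1]$ and a minimum fixed point $e \in [0, a]$, distinct because $d \ge b > a \ge e$; so (a) is the relevant disjunct. To verify that $(d, d, \ldots)$ is an endpoint I apply Theorem \ref{marcy}. The decisive input is Lemma \ref{lem-claim1}(iv): writing $h = f \uhr [b, 1]$, the invariance $h([b, 1]) \subset [b, 1]$ from \ref{lem-claim1}(ii) together with the singleton inverse limit $\L([b, 1], h) = \{(d, d, \ldots)\}$ make the sets $h^k([b, 1])$ a nested decreasing sequence of compact intervals with intersection $\{d\}$, forcing Hausdorff convergence $h^k([b, 1]) \to \{d\}$. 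Given $n$, $J_n = [a_n, b_n]$ with $d \in (a_n, b_n)$, and $\ep > 0$ (which, without loss of generality, satisfies $\ep < \min(d - a_n, b_n - d)$), I pick $k$ so that $h^k([b, 1]) \subset (a_n + \ep, b_n - \ep)$. For any $J_{n+k} = [l, r]$ with $d \in J_{n+k}$ and $f^k(J_{n+k}) = J_n$, the right-hand piece $[d, r]$ lies in $[b, 1]$, so $f^k([d, r]) \subset h^k([b, 1]) \subset (a_n + \ep, b_n - \ep)$; hence both $(f^k \uhr J_{n+k})^{-1}([a_n, a_n + \ep])$ and $(f^k \uhr J_{n+k})^{-1}([b_n - \ep, b_n])$ are confined to $[l, d)$, on the same side of $d$, and $d$ does not separate them. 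The argument for $(e, e, \ldots)$ is the mirror image using Lemma \ref{lem-claim2}.

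In the case $b < a$, Lemma \ref{lem-claim3} provides a unique fixed point $d \in (b, a)$ and a unique $2$-cycle $\{s, t\}$ with $s < t$; applying the intermediate value theorem to $f$ on $[s, t]$ (where $f$ swaps the endpoints) produces a fixed point in $(s, t)$, which by uniqueness must equal $d$, so $s < d < t$. I obtain (b) by reducing to the previous case, applied to $f^2$. By Lemma \ref{lem-f-2}, $f^2$ admits no splitting sequence; by Lemma \ref{lem-2cyclesonly}, $f$ has no periodic points of period greater than two, so the fixed points of $f^2$ are exactly $\{s, d, t\}$, with minimum $s$ and maximum $t$. Since $f^2$ has more than one fixed point, Lemma \ref{lem-claim3}(ii) applied to $f^2$ forbids $b_{f^2} < a_{f^2}$, and so $a_{f^2} < b_{f^2}$; the Case $a < b$ argument applied to $f^2$ then delivers $(s, s, \ldots)$ and $(t, t, \ldots)$ as endpoints of $\L\,\Bold{f^2}$. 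The shift-of-even-indices map $\phi : \Lf \to \L\,\Bold{f^2}$ defined by $\phi(\Bold x) = (x_0, x_2, x_4, \ldots)$ is a homeomorphism, with inverse $(y_0, y_1, \ldots) \mapsto (y_0, f(y_1), y_1, f(y_2), y_2, \ldots)$, and it carries $(s, t, s, t, \ldots) \mapsto (s, s, \ldots)$ and $(t, s, t, s, \ldots) \mapsto (t, t, \ldots)$; since endpoint-hood is a topological invariant, (b) follows.

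The main obstacle is establishing the Hausdorff contraction $h^k([b, 1]) \to \{d\}$ in Case $a < b$ --- it is what converts the qualitative singleton inverse limit from Lemma \ref{lem-claim1}(iv) into the uniform rate that permits a single choice of $k$ to enforce the crookedness criterion against every admissible $J_{n+k}$. Case $b < a$ is thereafter a largely formal reduction, provided one verifies via Lemma \ref{lem-2cyclesonly} and Lemma \ref{lem-claim3}(ii)--(iii) that $\{s, d, t\}$ exhausts the fixed points of $f^2$, so that Lemma \ref{lem-claim3}(ii) can be invoked to place $f^2$ inside the already-handled case.
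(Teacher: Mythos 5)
Your proposal is correct and follows essentially the same route as the paper: the same case split on $a<b$ versus $b<a$, Theorem \ref{marcy} together with Lemma \ref{lem-claim1} (ii), (iv) for the first case, and the reduction of the second case to the first via $f^2$, Lemma \ref{lem-f-2}, Lemma \ref{lem-claim3}, and the even-coordinate homeomorphism. The only (harmless) difference is that you certify $\ep$-crookedness by contracting the forward images $h^k([b,1])$ to $\{d\}$ in the Hausdorff metric, whereas the paper equivalently tracks the backward preimages of the endpoints of $J_0$ into $[0,b)$.
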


\begin{proof}
We consider two cases:
\begin{enumerate}
\item $a < b$ and
\item $b < a$. 
\end{enumerate}
Case (1): We  first show that $(d, d, \ldots)$ is an endpoint of $\Lf$ which we do by applying 
Lemma \ref{lem-claim1} and Theorem \ref{marcy}.

Let $\ep > 0$ and let $J_{0} = [\al_0, \be_0]$ be an interval such that $d \in (\al_0, \be_0)$. 
By Lemma \ref{lem-claim1} (iv), for every  $\Bold x \in \Lf$ with $x_0 \in [b, 1] \setminus \{ d \}$, 
there exists $j$ such that $x_j < b$, and so by Lemma \ref{lem-claim1} (ii), $x_n < b$ for every 
$n > j$. Hence there exists $k$ such that $f^{-k}(\al_0) \subset [0, b)$ and $f^{-k}(\be_0) \subset [0, b)$, 
and therefore $f^{-k}([\al_0, \al_0 + \ep]) \cap [0, b] \ne \E$ and 
$f^{-k}([\be_0 - \ep, \be_0]) \cap [0, b] \ne \E$. Thus, as $[0, b]\subset [0, d)$, $f^k$ is $\ep$-crooked 
with respect to $(d, d, \ldots)$. 

By applying Lemma \ref{lem-claim2} and Theorem \ref{marcy} we can analogously establish that 
$(e ,e, \ldots)$ is an endpoint of $\Lf$.

Case (2): Let $d$ be a fixed point between $b$ and $a$. We now show that $(s, t, s, t, \ldots )$ and 
$(t, s, t, s, \ldots )$ 
are endpoints. 

Let $g = f^2$. By Lemma \ref{lem-f-2}, $g$ does not admit a splitting sequence. Since $f$ admits a 
2-cycle, $g$ admits at least two fixed points, and hence by Lemma \ref{lem-claim3} (ii), $g$ must 
satisfy the condition of case (1). Thus $g$ admits at least three fixed points $d$, $s'$ and $t'$, 
such that $d$ is the fixed point guaranteed by Lemma \ref{lem-claim3} (ii), $s'$ is the minimum and 
$t'$ the maximum fixed point admitted by $g$. Hence $s' < d < t'$. It follows from Lemma \ref{lem-claim3} 
(ii) and (iii) and Lemma \ref{2-cycle-order}, that $\langle s', t' \rangle = \langle s, t \rangle$.

Now the function $h : \Lf \to \Lg$ defined by $$h((x_0, x_1, x_3, \ldots )) = (x_0, x_2, x_4, \ldots )$$
is a homeomorphism, so 
$$(s, t, s, t, \ldots ) = h^{-1}((s ,s, \ldots )) \textrm{ and } (t, s, t, s, \ldots ) = h^{-1}((t ,t, \ldots ))$$  
are endpoints of $\Lf$.

Thus, if case (1) holds we have two fixed points that determine two endpoints of $\Lf$, and if case (2) 
holds we have a 2-cycle that determines two endpoints as required.
\end{proof}

\begin{lemma}\label{one-boundary-cycle}
Let $f : [0, 1] \to [0, 1]$ be a continuous surjective function that does not admit a splitting sequence. 
If $f$ admits a 2-cycle $\{s,t\}$  such that $s\in\{0,1\}$, then $(s,t,s,t,\ldots)$ and $(t,s,t,s,\ldots)$ 
are endpoints.
\end{lemma}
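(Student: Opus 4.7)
The plan is to deduce the lemma as a direct consequence of Proposition \ref{lem-endpoints} by showing that its case (b) applies with $\{s, t\}$ itself serving as the witnessing 2-cycle. Two points need verification: first, that we are in the case $b < a$ of the proof of Proposition \ref{lem-endpoints} (which is the case producing conclusion (b)); second, that $\{s, t\}$, ordered as $s^{*} < t^{*}$, satisfies the minimality condition that $s^{*} < u$ for every other 2-cycle $\{u, v\}$ with $u < v$.

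For the first point, suppose for contradiction that $a < b$, so that Lemmas \ref{lem-claim1} and \ref{lem-claim2} apply. If $s = 0$, then $f(0) = t$ and $f(t) = 0$, so $t \in f^{-1}(0)$ and hence $t \le a$, making $\{0, t\} \subseteq [0, a]$ a 2-cycle of $f \uhr [0, a]$, contradicting Lemma \ref{lem-claim2}(iii). The case $s = 1$ is symmetric: $t \in f^{-1}(1)$ gives $t \ge b$, so $\{1, t\} \subseteq [b, 1]$ is a 2-cycle of $f \uhr [b, 1]$, contradicting Lemma \ref{lem-claim1}(iii). Hence $b < a$.

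For the second point, let $\{u, v\}$ be any other 2-cycle with $u < v$. If $s^{*} = 0$, then $u = 0$ would force $v = f(0) = t^{*}$, making $\{u, v\}$ coincide with $\{s^{*}, t^{*}\}$; hence $u > 0 = s^{*}$. If $t^{*} = 1$ (the case when $1$ is one of the elements of our cycle), Lemma \ref{2-cycle-order} gives either $s^{*} < u < v < t^{*}$ or $u < s^{*} < t^{*} < v$, and the latter forces $v > 1$, which is impossible. In either case $s^{*} < u$, so Proposition \ref{lem-endpoints}(b) yields that the two alternating sequences built from $s^{*}$ and $t^{*}$ are endpoints of $\Lf$; these are precisely $(s, t, s, t, \ldots)$ and $(t, s, t, s, \ldots)$, regardless of how $s$ and $t$ are labelled within the cycle.

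I do not anticipate a significant obstacle, since the heavy lifting has already been carried out in Proposition \ref{lem-endpoints}. The only mild care needed is the bookkeeping in the second step about which of $s$ and $t$ is the smaller element of the 2-cycle, together with the observation that the two sequences $(s, t, s, t, \ldots)$ and $(t, s, t, s, \ldots)$ are unchanged when the labels $s$ and $t$ are swapped.
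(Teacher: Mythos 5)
Your proposal is correct and follows essentially the same route as the paper: rule out the case $a<b$ via Lemma \ref{lem-claim1}(iii) (resp.\ Lemma \ref{lem-claim2}(iii)), conclude that case (2) of Proposition \ref{lem-endpoints} applies, and use Lemma \ref{2-cycle-order} to see that $\{s,t\}$ is the extremal 2-cycle named in Proposition \ref{lem-endpoints}(b). Your write-up is somewhat more explicit than the paper's (which treats only $s=1$ and leaves the minimality check implicit), but the argument is the same.
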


\begin{proof}
Suppose $s = 1$. The proof is similar if $s = 0$. Observe that $t \ge b = \min(f^{-1}(1))$, and hence $f$ 
does not satisfy Lemma \ref{lem-claim1} (iii) which states that the function $f \uhr[b, 1]$ does not admit 
a 2-cycle. Hence $f$ satisfies the condition $b < a$ (case (2) in the proof of Proposition \ref{lem-endpoints}). 
Since $s = 1$, by Lemma \ref{2-cycle-order}, $\{ s, t \}$ is the 2-cycle determining the two endpoints of 
Proposition \ref{lem-endpoints} (b).
\end{proof}

\begin{lemma}\label{lem:p-not-in-E}
Let $f : [0, 1] \to [0, 1]$ be a continuous surjective function such that $f$ does not admit a splitting 
sequence, and $f$ admits at least two fixed points. If $\si = \ls T_n = [l_n, r_n] : n \in \N \rs$ is 
the  sequence generated by a point $\Bold p$, $m \in \N$ and interval $[c,d]$, and $[c,d]$ does not contain 
the maximum or minimum fixed point, then there exists $k \in \N$ such that for every $n \ge k$, 
$p_n \not\in \{ l_n, r_n \}$.
\end{lemma}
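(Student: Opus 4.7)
My plan is to argue by contradiction. Suppose there is an infinite set $N \se \N$ such that $p_n \in \{l_n, r_n\}$ for every $n \in N$. By passing to an infinite subset and using symmetry (the case of right endpoints is analogous), we may assume $p_n = l_n$ for all $n \in N$.

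\emph{Reduction to case (a) of Proposition \ref{lem-endpoints}.} If $b < a$ held, Lemma \ref{lem-claim3}(ii) would force $f$ to admit exactly one fixed point, contradicting the hypothesis that $f$ admits at least two fixed points. Hence $a < b$, so Proposition \ref{lem-endpoints} applies in case (a): the only endpoints of $\Lf$ are the constant sequences $(d_\star, d_\star, \ldots)$ and $(e_\star, e_\star, \ldots)$, where $d_\star$ and $e_\star$ denote the maximum and minimum fixed points of $f$ respectively.

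\emph{Proving $\Bold p$ is an endpoint of $\Lf$.} I would verify the Barge--Martin criterion (Theorem \ref{marcy}). Fix $n \in \N$, $\ep > 0$, and a closed interval $J_n = [\al_n, \be_n]$ with $p_n \in (\al_n, \be_n)$. Choose $k$ so large that $n+k \in N$; then $p_{n+k} = l_{n+k}$ is the left endpoint of $T_{n+k}$. Given any $J_{n+k} \ni p_{n+k}$ with $f^k(J_{n+k}) = J_n$, the interval $T_{n+k}$ lies entirely on one side of $p_{n+k}$ in $J_{n+k}$. The goal is to show that the two sets
\[
(f^k \uhr J_{n+k})^{-1}([\al_n, \al_n + \ep]) \quad \text{and} \quad (f^k \uhr J_{n+k})^{-1}([\be_n - \ep, \be_n])
\]
both lie on the $T_{n+k}$-side of $p_{n+k}$, so that $p_{n+k}$ does not separate them in $J_{n+k}$. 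The delicate point is controlling $f^k$ on the portion of $J_{n+k}$ lying outside $T_{n+k}$: any configuration in which this outside portion genuinely crossed $p_{n+k}$ with preimages landing in $[\al_n, \al_n + \ep]$ or $[\be_n - \ep, \be_n]$ would, by iterating and invoking Lemmas \ref{lem-A-B} and \ref{lem-no-extreme-pt}, produce a pair of intervals generating a splitting sequence, contradicting our standing hypothesis. Hence $f^k$ is $\ep$-crooked with respect to $p_{n+k}$, and $\Bold p$ is an endpoint of $\Lf$.

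\emph{Deriving the contradiction.} By Proposition \ref{lem-endpoints}(a), $\Bold p$ equals either $(d_\star, d_\star, \ldots)$ or $(e_\star, e_\star, \ldots)$, so $p_m \in \{d_\star, e_\star\}$. But by definition of a generated sequence $p_m \in (c, d) \se [c, d]$, while by hypothesis $[c, d]$ contains neither the maximum nor the minimum fixed point of $f$, so $p_m \notin \{d_\star, e_\star\}$, a contradiction.

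The main obstacle is the middle step: rigorously verifying $\ep$-crookedness requires careful analysis of $f^k$ on both sides of $p_{n+k}$ within $J_{n+k}$ and judicious use of the no-splitting hypothesis to exclude pathological behaviour where $J_{n+k}$ extends past $T_{n+k}$. The first and last steps amount to bookkeeping with earlier lemmas.
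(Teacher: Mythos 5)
Your overall strategy has two genuine gaps, one in the middle step and one in the final step, and the second is fatal to the architecture of the argument even if the first could be repaired. The final step claims that, by Proposition \ref{lem-endpoints}(a), the \emph{only} endpoints of $\Lf$ are $(d_\star,d_\star,\ldots)$ and $(e_\star,e_\star,\ldots)$. Proposition \ref{lem-endpoints} asserts only that these two points \emph{are} endpoints; it says nothing about uniqueness. The statement that $\Lf$ has exactly two endpoints is equivalent to $\Lf$ being an arc, which is the content of Lemma \ref{lem-arc} --- and that lemma is proved later, via Lemma \ref{lem-2endpoints}, whose proof invokes the very lemma you are trying to prove. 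So your concluding contradiction rests on a uniqueness claim that is unavailable at this point in the development, and attempting to import it makes the argument circular.

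The middle step is also not a proof but a hope: you assert that any failure of $\ep$-crookedness ``would, by iterating and invoking Lemmas \ref{lem-A-B} and \ref{lem-no-extreme-pt}, produce a pair of intervals generating a splitting sequence,'' without exhibiting the intervals. Note moreover that Theorem \ref{marcy} quantifies over \emph{every} interval $J_{n+k}$ with $p_{n+k}\in J_{n+k}$ and $f^k(J_{n+k})=J_n$, where $J_n$ is an \emph{arbitrary} interval with $p_n$ in its interior; such $J_{n+k}$ need bear no relation to $T_{n+k}$, so positioning $T_{n+k}$ on one side of $p_{n+k}$ does not control the sets whose separation must be ruled out. The paper's proof avoids endpoints entirely and builds the splitting sequence directly: if $p_n\in\{l_n,r_n\}$ for infinitely many $n$, then since $p_{n+1}$ lies in the \emph{interior} of the component $[l'_{n+1},r'_{n+1}]$ of $f^{-1}(T_n)$ containing it (this follows from $p_m\in(c,d)$), while its image $p_n$ is an \emph{endpoint} of $T_n=f([l'_{n+1},r'_{n+1}])$, the map must fold at $p_{n+1}$; this yields intervals $A_{n+1}\subset[l'_{n+1},p_{n+1}]$ and $B_{n+1}\subset[p_{n+1},r'_{n+1}]$ with $f(A_{n+1})=f(B_{n+1})=T_n$ and $A_{n+1}\cap B_{n+1}\se\{p_{n+1}\}$, which are exactly the witnesses needed to assemble a splitting sequence and reach a contradiction. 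If you want to salvage your approach, you would need to prove that folding argument anyway --- at which point the detour through Theorem \ref{marcy} and Proposition \ref{lem-endpoints} is unnecessary.
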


\begin{proof}
By Lemma \ref{lem-claim3} (ii), $f$ satisfies the requirement of case (1) in the proof of Proposition 
\ref{lem-endpoints}. Thus $a = \max f^{-1}(0) < b = \min(f^{-1}(1))$ and by Lemma \ref{lem-claim1} (iv) 
and Lemma \ref{lem-claim2} (iv), there exists $j \in \N$ such that for each $n > j$, $T_n \subset [a,b]$.

By Lemma \ref{L-exists}, $\si$ is tight so there exists $r > j$ such that for every $n > r$, $T_n$ is 
nondegenerate. Let $$N = \{ n > r : p_n \in \{ l_n, r_n \} \},$$ and suppose that $N$ is infinite. For every 
$n \in \N$, let $[l'_{n+1}, r'_{n+1}]$ be the component of $f^{-1}(T_n)$ containing $p_{n+1}$. Since 
$p_m \in (c, d) = \Int T_m$, for every $n > m$ we have that $p_n \not\in \{ l'_n, r'_n \}$. If $n \in N$ 
then either 
$$f([l'_{n+1}, r'_{n+1}]) = [l'_{n}, p_{n}] = T_n \textrm{ or } f([l'_{n+1}, r'_{n+1}]) = [p_{n}, r'_{n}] = T_n.$$
 
Then for $n \in N$ we have that $T_n \subset [a, b]$ and $p_n \in \{ l_n, r_n \}$, and hence we can choose two 
sets $A_{n+1} \subset [l'_{n+1}, p_{{n+1}}]$ and $B_{n+1} \subset [p_{n+1}, r'_{n+1}]$ such that 
$f(A_{n+1}) = f(B_{n+1}) = T_{n}$ and $A_{n+1} \cap B_{n+1} \se \{ p_{n+1} \}$. 

Let $R_0 = T_0$. If $n \ge 0$ and $R_{n}$ has been defined, let $R_{n+1}$ be a subinterval of either 
$T_{n+1} \cap A_{n+1}$ or $T_{n+1} \cap B_{n+1}$ if $n \in N$, otherwise let $R_{n+1}$ be any subinterval 
of $T_{n+1}$, and in each case such that $f(R_{n+1}) = R_{n}$. For each $n \in N$, if $T_{n+1} \subset A_{n+1}$ 
let $S_{n+1}$ be a subinterval of $B_{n+1}$, and if $T_{n+1} \subset B_{n+1}$ let $S_{n+1}$ be a subinterval 
of $A_{n+1}$, such that $f(S_{n+1}) = R_n$. Thus $\ls R_n : n \in \N \rs$ is a splitting sequence, a contradiction.
\end{proof}


\section{Arcs}\label{sec-arcs}

\begin{lemma}\label{lem-2endpoints}
If $f : [0, 1] \to [0, 1]$ is a continuous surjective function with exactly two fixed points, and $f$ 
does not admit a splitting sequence, then $\Lf$ is an arc.
\end{lemma}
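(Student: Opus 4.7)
The plan is to apply the paper's definition of an arc as a metric continuum with exactly two nonseparating points. Since $f$ has two fixed points and no splitting sequence, Lemma~\ref{lem-claim3}(ii) rules out the case $b<a$ (which would force a unique fixed point), so we are in the regime $a<b$; with $e<d$ the two fixed points, Proposition~\ref{lem-endpoints}(a) identifies $\Bold e=(e,e,\ldots)$ and $\Bold d=(d,d,\ldots)$ as two nonseparating points of $\Lf$, and Lemmas~\ref{lem-claim1}, \ref{lem-claim2} describe the dynamics at the ends $[b,1]$ and $[0,a]$. The task thus reduces to showing every other $\Bold p\in\Lf$ is separating.

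Given such $\Bold p$, a coordinate $p_m\in\{e,d\}$ propagates backwards along $f(p_m)=p_{m-1}$ to force $\Bold p$ to equal the corresponding constant sequence, so one can choose an index $m$ with $p_m\in(e,d)\cap(0,1)$. Pick a nondegenerate $[\al,\be]\subset(e,d)$ with $p_m\in(\al,\be)$, and let $\si=\ls T_n=[l_n,r_n]:n\in\N\rs$ be the sequence generated by $\Bold p,m,[\al,\be]$. Lemma~\ref{L-exists} makes $\si$ tight, and Lemma~\ref{lem:p-not-in-E} supplies $k\ge m$ such that $p_n\in(l_n,r_n)$ for every $n\ge k$. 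I would then propose the separation
\[
U=\{\Bold x\in\Lf:x_n<p_n\text{ for some }n\ge k\},\qquad V=\{\Bold x\in\Lf:x_n>p_n\text{ for some }n\ge k\}.
\]
Both sets are open in $\Lf$; the bonding equation gives $x_{n_0}\ne p_{n_0}\Rightarrow x_n\ne p_n$ for all $n\ge n_0$, so $U\cup V\supseteq\Lf\sm\{\Bold p\}$; and Lemmas~\ref{lem-claim1}(iv), \ref{lem-claim2}(iv) combined with $e<p_n<d$ for $n\ge k$ give $\Bold e\in U$ and $\Bold d\in V$.

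The main obstacle is disjointness $U\cap V=\E$. A hypothetical $\Bold x\in U\cap V$ yields, by pigeonholing along the coordinates, an index $j\ge k$ at which $x_j$ and $x_{j+1}$ sit on opposite sides of $p_j$ and $p_{j+1}$, while $f(x_{j+1})=x_j$, $f(p_{j+1})=p_j$, and $p_{j+1}\in\Int T_{j+1}$. This sign-flip forces $f$ to behave nonmonotonically around $p_{j+1}$ in a way that should produce disjoint closed subintervals $A,B\subset T_{j+1}$ on either side of $p_{j+1}$ with $f(A)=f(B)$ an interval about $p_j$ in $\Int T_j$; combined with the recurrent preimage structure of $\si$—that the bonding equations return $T_j$ into $A$ after some finite iterate—this delivers a nondegenerate component of $f^{-s}(A)$ inside $A$, at which point Lemma~\ref{lem-A-B} manufactures a splitting sequence, contradicting the hypothesis. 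Making the sign-flip-to-splitting construction rigorous, in particular identifying the precise recurrence of preimages into $A$ while accounting for whether each restriction $f\uhr T_{n+1}\to T_n$ is order-preserving or order-reversing, is the technical heart of the proof.
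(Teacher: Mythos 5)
Your setup (reducing to $a<b$, extracting the two endpoints from Proposition~\ref{lem-endpoints}(a), and aiming to show every other point separates) matches the paper, but the proposed separation $U=\{\Bold x: x_n<p_n\ \text{for some}\ n\ge k\}$, $V=\{\Bold x: x_n>p_n\ \text{for some}\ n\ge k\}$ has a genuine gap, and the sketched route to disjointness rests on a false step. A ``sign flip'' --- $x_j<p_j$ while $x_{j+1}>p_{j+1}$ --- does \emph{not} force $f$ to be nonmonotone near $p_{j+1}$: it occurs whenever $f$ is strictly decreasing on the interval between $p_{j+1}$ and $x_{j+1}$, in which case there are no intervals $A,B$ on opposite sides of $p_{j+1}$ with $f(A)=f(B)$, and no splitting sequence to be extracted. (Note also that $x_{j+1}$ need not lie in $T_{j+1}$ at all.) Concretely, for the map of Figure~\ref{fig-arc}, which admits no splitting sequence, two backward orbits that both follow the decreasing branch through $[\frac34,1]$ satisfy $x_{n+1}-p_{n+1}=-2(x_n-p_n)$, so the sign of $x_n-p_n$ alternates at every step for as long as both orbits stay on that branch: sign flips are ubiquitous and carry no information about folding. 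For that particular map one can rescue disjointness by taking $k$ so large that $p_n$ has a unique preimage for all $n\ge k$, but your $k$ comes from Lemma~\ref{lem:p-not-in-E} and is not chosen that way, and in general infinitely many $p_n$ may have several preimages.

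That last possibility is the real difficulty, and your proposal does not engage with it; it occupies most of the paper's proof. The paper splits on whether $N=\{n>0: |f^{-1}(p_{n-1})|>1\}$ is finite or infinite. If $N$ is finite, preimages of $p_n$ are eventually unique, $f^{-1}([0,p_n])=[0,p_{n+1}]$ and $f^{-1}([p_n,1])=[p_{n+1},1]$, and $\Lf$ decomposes as the union of the two corresponding inverse limits meeting only at $\Bold p$ --- this is the orientation-safe version of your $U/V$. If $N$ is infinite, the paper considers, for every $\ep$ and $i$, the tight sequence $\si_{\ep,i}$ generated by $\Bold p$, $i$ and $[p_i-\ep,p_i+\ep]$, and proves (this is where the no-splitting hypothesis and Lemma~\ref{lem:p-not-in-E} actually do work) that all preimages of $p_n$ eventually lie inside $T^{\ep,i}_{n+1}$; the separation is then assembled from the preimages of the two components of $[0,1]\setminus T^{\ep,i}_{m_{\ep,i}}$, unioned over all admissible $\ep, i$. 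Your argument would need a step of comparable strength to control the extra preimages of $p_n$; the sign-flip-to-splitting implication as written cannot supply it.
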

 
\begin{proof}
Suppose $d$ and $e$ are the only fixed points, $e < d$. Since $f$ is surjective and does not admit a 
splitting sequence, either $e = 0$ or $d = 1$. Suppose that $e = 0$ and $d \ne 1$ (the proof is analogous 
if $e \ne 0$ and $d = 1$, or if $e = 0$ and $d = 1$). Since $f$ is surjective, for every $x \in (0, d)$, 
$f(x) > x$. The conditions of case (1) in the proof of Proposition \ref{lem-endpoints} are satisfied so 
$(0, 0, \ldots)$ and $(d, d, \ldots)$ are endpoints. 

Let $$\Bold p \in \Lf \setminus \{ (0 ,0, \ldots), (d, d, \ldots) \}.$$ We show that $\Bold p$ is a 
separating point. Recall $b = \min \{ x \in [0, 1] : f(x) = 1 \}$. By Lemma \ref{lem-claim1} (iv) it 
follows that for some $m \in \N$, $p_n < b$ for every $n > m$. Let $$N = \{ n > 0 : |f^{-1}(p_{n-1})| > 1 \}.$$

(a) Suppose $N$ is finite. Choose some $m \ge \max(N)$ such that $p_m < b$. Then for every $n > m$, $p_n < b$ 
and $f^{-1}(p_n) = \{ p_{n+1} \}$, so 
$$f^{-1}([0, p_n]) = [0, p_{n+1}] \textrm{ and } f^{-1}([p_n, 1]) = [p_{n+1}, 1].$$ For each $n \in \N$ let 
$X_n = [0, p_{m+n}]$, $Y_n = [p_{m+n}, 1]$, $g_n = f \uhr X_n$ and $h_n = f \uhr Y_n$, and let $X = \L(X_n, g_n)$ 
and $Y = \L(Y_n, h_n)$. Then clearly $$\pi_{[m, \infty)}(\Lf) = X \cup Y \textrm{ and } X \cap Y = \{\Bold p\}.$$
Let $X' = \pi_{[m, \infty)}^{-1}(X)$ and $Y' = \pi_{[m, \infty)}^{-1}(Y)$. Since $\pi_{[m, \infty)}^{-1}$ is the
bijection defined by $$(x_m, x_{m+1}, \ldots) \mapsto (f^{m-1}(x_m), \ldots, f(x_m), x_m, x_{m+1}, \ldots),$$
it follows that $\Lf = X' \cup Y'$ and $X' \cap Y' = \{ \Bold p \}$. Thus $\Bold p$ is a separating point of $\Lf$.

(b) Suppose $N$ is infinite. For all $\ep > 0$ and $i \in \N$ let 
$$\si_{\ep, i} = \ls T^{\ep, i}_n = [a^{\ep, i}_n, b^{\ep, i}_n] : n \in \N \rs$$ be the tight sequence generated 
by $\Bold p$, $i$ and $[p_i - \ep, p_i + \ep]$. Suppose that for some $i$ and $\ep$, there is an infinite set 
$M \se N$ such that for every $n \in M$,  $f^{-1}(T^{\ep, i}_n) \setminus \Int \, (T^{\ep, i}_{n+1})$ has a 
component $C_{n+1}$ with $p_{n} \in f(C_{n+1})$. By Lemma \ref{lem:p-not-in-E} we can assume that for each 
$n \in M$, $p_n \not\in \{ a^{\ep, i}_n, b^{\ep, i}_n \}$. Let $k \in M$ and let $L_k = [a^{\ep, i}_k, p_k]$ 
and $R_k = [p_k, b^{\ep, i}_k]$. If $j \ge k$ and $L_j, R_j$ have been defined, let $L_{j+1}$ and $R_{j+1}$ be 
components of  $T^{\ep,i}_{j+1}$ such that $f(L_{j+1})=L_j$ and $f(R_{j+1})=R_j$. Clearly each of the sets 
$L_{k+1}$ and $R_{k+1}$ contains a different endpoint of $T^{\ep, i}_{k+1}$. If $j\le k$ and $L_j, R_j$ have been 
defined, let $L_{j-1} = f(L_j)$ and $R_{j-1} = f(R_{j})$.
 
Then $\tau_1 = \ls L_n : n \in \N \rs$ and $\tau_2 = \ls R_n : n \in \N \rs$ are tight sequences. Observe that 
for each $n \in M$  there is a subinterval $D_{n+1}$ of $C_{n+1}$ such that either $f(D_{n+1}) = L_n$, or 
$f(D_{n+1}) = R_n$. Then one of the sequences $\tau_1$ or $\tau_2$ is a splitting sequence.
 
Thus we have that for every $\ep$ and $i$, $f^{-1}(p_n) \subset T^{\ep, i}_{n+1}$ for all but finite $n \in N$. 
For every $\ep > 0$ and $i \in \N$ such that $$0, d, 1 \not\in [p_i - \ep, p_i + \ep],$$ choose $m_{\ep, i}$ 
such that $f^{-1}(p_n) \subset T^{\ep, i}_{n+1}$ for all $n \ge m_{\ep, i}$. Thus 
$[0,1] \setminus T^{\ep, i}_{m_{\ep, i}}$ has two components, $A'_{\ep, i}$ and $B'_{\ep, i}$. Let  
$$A_{\ep, i} = \pi^{-1}_{m_{\ep, i}}(A'_{\ep, i}), \ B_{\ep, i} = \pi^{-1}_{m_{\ep, i}}(B'_{\ep, i}),$$
$$A = \bigcup \{ A_{\ep, i} : \ep > 0, i \in \N \textrm{ and } 0, d, 1 \not\in [p_i - \ep, p_i + \ep] \},$$
and $$B = \bigcup \{ B_{\ep, i} : \ep > 0, i \in \N \textrm{ and } 0, d, 1 \not\in [p_i - \ep, p_i + \ep] \}.$$

Then $\Bold p \not\in A \cup B$, $A \cap B = \E$ and, since 
$\bigcap \{ L(\si_{\ep, i}) : \ep > 0, i \in \N \} = \{ \Bold p \}$, $A \cup B \cup \{ \Bold p \} = \Lf$.
Thus $\Bold p$ is a separating point and so $\Lf$ is an arc.
\end{proof}

The next three  lemmas reference the behavior of a function on either side of a fixed point. We define four 
types of fixed point in the following definition in order to simplify the discussions.

\begin{definition}
Suppose that $f : [0, 1] \to [0, 1]$ is a  continuous surjective function and $c, d, e \in [0, 1]$, $c < d < e$. 
If $d$ is a fixed point of $f$, $d$ is the only fixed point in the interval $(c, e)$, either $c = 0$ or $c$ is 
a fixed point, and either $e = 1$ or $e$ is a fixed point, then $d$ is 
\begin{itemize}
\item
an \emph{S-type} fixed point if for each $x \in [c, d]$, $f(x) \le x$, and for each $x \in [d, e]$, $f(x) \ge x$,
\item
an \emph{N-type} fixed point if for each $x \in [c, d]$, $f(x) \ge x$, and for each $x \in [d, e]$, $f(x) \le x$,
\item
an \emph{M-type} fixed point if for each $x \in [c, e]$, $f(x) \ge x$, and 
\item
a \emph{W-type} fixed point if for each $x \in [c, e]$, $f(x) \le x$.
\end{itemize}
In each case the type is \emph{witnessed by $(c,e)$}.
\end{definition}

\begin{lemma}\label{S-fixed}
Suppose that $f : [0, 1] \to [0, 1]$ is a continuous surjective function that does not admit a splitting 
sequence. If $f$ admits a fixed point $d$ that is S-type, M-type or W-type, then 
$$\Lf = \L([0, d],f \uhr [0, d]) \cup \L([d, 1], f \uhr [d, 1]),$$ and 
$$\L([0, d], f \uhr [0, d]) \cap \L([d, 1], f \uhr [d, 1]) = \{ (d, d, \ldots ) \}.$$
\end{lemma}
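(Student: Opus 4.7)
My plan is to reduce the statement to the two containments $f([0,d])\subseteq[0,d]$ and $f([d,1])\subseteq[d,1]$. Given these, for any $\Bold x\in\Lf$ with some $x_n<d$ the containment $f([d,1])\subseteq[d,1]$ forces $f^{-1}(x_n)\cap[d,1]=\emptyset$, so $x_{n+1}\in[0,d)$; by induction all subsequent coordinates lie in $[0,d)$, and symmetrically if some $x_n>d$. The only orbit that can touch both sides of $d$ is then $(d,d,\ldots)$, yielding the stated intersection and the union decomposition of $\Lf$.

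I would then argue the two containments by contradiction, working type by type. Each type supplies one direction for free on the witness interval $(c,e)$: S-type gives both inclusions (from $f\le x$ on $[c,d]$ and $f\ge x$ on $[d,e]$); M-type gives only $f([d,e])\subseteq[d,1]$; W-type gives only $f([c,d])\subseteq[0,d]$. For the M-type extension to all of $[d,1]$: assume some $y\in(e,1]$ has $f(y)<d$ (the case $e=1$ being trivial), and let $y_1=\min\{y\in(d,1]:f(y)=d\}$, which must exceed $e$ by M-type on $[d,e]$. Setting $z_1=\min\{x\in(d,y_1]:f(x)=e\}\le e$ and $z_2=\max\{x\in[d,y_1):f(x)=e\}\ge e$, continuity together with $f(d)=d$, $f(e)=e$, $f(y_1)=d$, and $f>d$ on $(d,y_1)$ shows $f((d,z_1))=f((z_2,y_1))=(d,e)$. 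The components $C_1,C_2$ of $f^{-1}((d,e))$ containing these open intervals are distinct (as $z_1\le e\le z_2$) and both map onto $(d,e)$, contradicting Lemma \ref{lem-no-extreme-pt} applied to the fixed-point pair $d<e$.

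The M-type ``wrong-direction'' containment $f([0,d])\subseteq[0,d]$ is the main obstacle, and I would handle it via Lemma \ref{lem-A-B}. If some $x_*\in[0,d)$ has $f(x_*)>d$, a continuity / IVT argument using $f(d)=d$ together with the values of $f$ at $0$ and at the fixed point $c$ (if $c>0$) locates a ``bump'' interval $(u,v)\subseteq[0,d]$ with $f(u)=f(v)=d$ and $f>d$ on $(u,v)$; this exhibits $v$ as a second preimage of $d$ distinct from $d$ itself. Then I pick small neighborhoods $A\ni d$ and $B\ni v$ with $A\cap B=\emptyset$ and widths adjusted so that $f(A)=f(B)$---achievable because near each preimage the local behavior of $f$ supplies matching ``arms'' climbing to the common image. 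Since $f(d)=d\in A$, a neighborhood of $d$ lies in $f^{-1}(A)$, and its intersection with $A$ supplies the nondegenerate component required by Lemma \ref{lem-A-B} with $k=1$, producing a splitting sequence---contradiction. The S-type case is cleaner, since both on-witness containments are automatic and only the extensions to $[0,c]$ and $[e,1]$ require the contradiction scheme; and the W-type case is the left--right mirror of M-type.
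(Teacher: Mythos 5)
Your reduction to the two invariance statements $f([0,d])\subseteq[0,d]$ and $f([d,1])\subseteq[d,1]$ is sound, and your argument for the M-type extension on $[d,1]$ (two disjoint components of $f^{-1}((d,e))$ mapping onto $(d,e)$, contradicting Lemma \ref{lem-no-extreme-pt}) is correct and close to what the paper does. The gap is in the step you yourself flag as the main obstacle, the M-type containment $f([0,d])\subseteq[0,d]$. Lemma \ref{lem-A-B} does not merely need a nondegenerate piece of $f^{-1}(A)$ sitting inside $A$: its proof requires a nondegenerate subinterval $D\subseteq A$ with $f^{k}(D)=A$ (onto), since that is what lets the tight sequence re-enter $A$ at infinitely many levels. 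What you verify --- ``$f(d)=d\in A$, so a neighbourhood $D$ of $d$ lies in $f^{-1}(A)$'' --- only gives $f(D)\subseteq A$, the contracting direction, and $D\cap A$ is in general not a component of $f^{-1}(A)$ at all (the component of $f^{-1}(A)$ containing $d$ may spill out of $A$). For an M-type $d$ a two-sided neighbourhood $A\ni d$ genuinely fails: since $f(x)\ge x$ on $[c,e]$, $f(A)$ can omit all of $[d-\delta,d)$, so no subinterval of $A$ maps onto $A$ under any iterate. The paper's own Figure \ref{fig-arc} map shows your reasoning pattern proves too much: its fixed point $\frac56$ has the second preimage $\frac5{12}$, one can choose disjoint neighbourhoods $A\ni\frac56$ and $B\ni\frac5{12}$ with $f(A)=f(B)$, and a neighbourhood of $\frac56$ lies in $f^{-1}(A)$ --- yet that map admits no splitting sequence.

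The repair is exactly the paper's choice: put the self-covering interval on the expanding side of $d$. Assuming $p'=\max f([0,d])>d$ and setting $q=\min\{x\ge d: f(x)=p'\}$ (which lies in $(d,e)$ because Lemma \ref{lem-no-extreme-pt} forces $p'<e$), the M-type inequality gives $f([d,q])=[d,p']\supseteq[d,q]$, so $[d,q]$ maps onto itself from inside; pairing it with an interval $A\subseteq[p,d]$ to the left of $d$ with $f(A)=[d,p']$ then yields a splitting sequence of order $1$. (Your ``matching arms'' step for arranging $f(A)=f(B)$ has a related one-sidedness problem, since $f$ may have a one-sided extremum at the second preimage $v$; the same one-sided choice of intervals resolves it.) The S-type case survives your scheme only because there $f(x)\le x$ to the left and $f(x)\ge x$ to the right of $d$ force $f(A)\supseteq A$ for a two-sided $A$; but for S-type the paper's shorter route is that a second preimage of $d$ already produces a second onto component of $f^{-1}((c,d))$ or $f^{-1}((d,e))$, contradicting Lemma \ref{lem-no-extreme-pt} directly with no appeal to Lemma \ref{lem-A-B}.
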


\begin{proof}
Suppose $d$ is an S-type fixed point witnessed by $(c, e)$. Then by the definition of S-type, $c$ and $e$ are 
fixed points. By Lemma \ref{lem-no-extreme-pt}, $f^{-1}(d) = \{ d \}$ and hence the result follows.

Suppose that $d$ is an M-type fixed point witnessed by $(c, e)$. The proof for a W-type fixed point is analogous. 
Observe that, by the surjectivity of $f$ and Lemma \ref{lem-no-extreme-pt}, $c$ and $e$ are fixed points.

Let $p' = \max(f([0, d])$ and let $p = \max\{ x \in [0, d] : f(x) = p' \}$. By Lemma \ref{lem-no-extreme-pt}, 
$p' < e$. Let $q = \min\{ x \in [d, 1] : f(x) = p' \}$. If $p' = d$ then the result follows from 
Lemma \ref{lem-no-extreme-pt}. Suppose that $p' > d$. Let $A \se [p, d]$ be an interval such that $f(A) = [d, p']$. 
Then $([d, q], A)$ generates a splitting sequence of order 1, and hence $p' = d$. The result follows.
\end{proof}

\begin{lemma}\label{N-fixed}
Suppose that $f : [0, 1] \to [0, 1]$ is a continuous surjective function that does not admit a splitting sequence. 
If $f$ admits an N-type fixed point $d$ witnessed by $(c, e)$, then $\L([c, e], f \uhr [c, e])$ is an arc, and if  
$c$ and $e$ are fixed points, then $(d, d, \ldots )$ is a separating point of $\Lf$.
\end{lemma}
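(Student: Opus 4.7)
\emph{Plan.} My approach is to reduce to the restricted map $g = f\uhr[c,e]$, show that $\L([c,e], g)$ is an arc, and then derive the separating-point conclusion. First I would establish $f([c,e]) \subseteq [c,e]$ so that $g$ is a self-map. Assume $c$ and $e$ are both fixed (the cases $c = 0$ or $e = 1$ are trivial, since the relevant bound is then automatic). By Lemma~\ref{lem-no-extreme-pt} applied to the pair $(d,e)$, there is a unique component of $f^{-1}((d,e))$ mapping onto $(d,e)$; using $f(d) = d$, $f(e) = e$ and $f \le x$ on $[d,e]$, it sits in $[d,e]$. If $\max f([c,d]) \ge e$, then a second such component would appear in $[c,d]$, a contradiction; hence $\max f([c,d]) < e$. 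Symmetrically, Lemma~\ref{lem-no-extreme-pt} applied to $(c,d)$ gives $\min f([d,e]) > c$. Combined, $f([c,e]) \subseteq [c,e]$, and $g$ clearly inherits the non-admission of splitting sequences from $f$.

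The heart of the argument is showing $\L([c,e], g)$ is an arc. By the rescaled analogue of Proposition~\ref{lem-endpoints}, the points $(c,c,\ldots)$ and $(e,e,\ldots)$ are endpoints of $\L([c,e], g)$. It remains to show every other point is a separating point, which I would do by adapting the strategy of the proof of Lemma~\ref{lem-2endpoints}. Given $\Bold p \in \L([c,e], g)$ other than these two endpoints, set $N = \{n > 0 : |g^{-1}(p_{n-1})| > 1\}$. If $N$ is finite, then eventually $g^{-1}(p_n)$ is a singleton, so $\L([c,e], g)$ decomposes into two halves meeting only at $\Bold p$ exactly as in Lemma~\ref{lem-2endpoints}(a). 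If $N$ is infinite, I would build tight sequences around $\Bold p$ and invoke Lemma~\ref{lem:p-not-in-E}, exactly as in Lemma~\ref{lem-2endpoints}(b); any failure of separation would produce a splitting sequence, contradicting the hypothesis. The main obstacle is the third fixed point $d$ interior to $[c,e]$, absent from the setup of Lemma~\ref{lem-2endpoints}; the N-type condition ($g \ge x$ on $[c,d]$ and $g \le x$ on $[d,e]$) makes the dynamics near $d$ sufficiently tame for the case analysis to carry through.

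Finally, suppose $c$ and $e$ are both fixed. Since $f \ge x$ on $[c,d]$, the fixed point $c$ is either S-type or M-type (depending on the behavior of $f$ immediately to its left); symmetrically, $e$ is S-type or W-type. In all cases Lemma~\ref{S-fixed} applies at both $c$ and $e$, yielding
\[
\Lf = \L([0,c], f\uhr[0,c]) \cup \L([c,e], g) \cup \L([e,1], f\uhr[e,1])
\]
with the three pieces meeting only at $(c,c,\ldots)$ and $(e,e,\ldots)$. By the first part, $\L([c,e], g)$ is an arc with these as its endpoints, so the interior point $(d,d,\ldots)$ separates it. As the outer pieces $\L([0,c], f\uhr[0,c])$ and $\L([e,1], f\uhr[e,1])$ attach only at these endpoints and cannot bridge across $(d,d,\ldots)$, this point is a separating point of $\Lf$.
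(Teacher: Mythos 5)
Your reduction to $g=f\uhr[c,e]$, the invariance check via Lemma~\ref{lem-no-extreme-pt}, and the final paragraph deriving the separating-point statement from the arc structure are all reasonable (though in the last step $c$ or $e$ may be an accumulation point of fixed points, in which case it has no witnessing interval and is none of the four types; there you need Corollary~\ref{cor-fixed-points} rather than Lemma~\ref{S-fixed}). The genuine gap is in the central step. You propose to prove that $\L([c,e],g)$ is an arc by rerunning the proof of Lemma~\ref{lem-2endpoints} on $g$, but $g$ has \emph{three} fixed points $c<d<e$, and that lemma's hypothesis of exactly two fixed points is used essentially throughout its proof: the dichotomy ``either $e=0$ or $d=1$,'' the fact that $f(x)>x$ on all of $(0,d)$, the appeal to Lemma~\ref{lem-claim1}~(iv), and, in case (b), the restriction to pairs $(\ep,i)$ with the generating interval $[p_i-\ep,p_i+\ep]$ avoiding every fixed point --- which is impossible when $\Bold p=(d,d,\ldots)$, a point you must nevertheless show is a separating point of $\L([c,e],g)$ in order to conclude it is an arc. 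Your sentence asserting that the N-type condition makes the dynamics near $d$ ``sufficiently tame for the case analysis to carry through'' is precisely the claim that needs a proof, and none is given.

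The paper's proof supplies the missing idea by splitting \emph{at} $d$ instead of imitating Lemma~\ref{lem-2endpoints} across it: setting $p=\max f([c,d])$ and $q=\min f([d,e])$, Lemma~\ref{lem-no-extreme-pt} gives $p<e$ and $q>c$, so $f\uhr[c,p]$ and $f\uhr[q,e]$ are surjective self-maps each with exactly two fixed points; Lemma~\ref{lem-2endpoints} then applies verbatim to produce two arcs $A_1$ and $A_2$ with $A_1\cap A_2=\{(d,d,\ldots)\}$, the point $(d,d,\ldots)$ being an endpoint of each by Proposition~\ref{lem-endpoints}, and a short forward-orbit argument shows $\L([c,e],g)=A_1\cup A_2$. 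This yields the arc claim and exhibits $(d,d,\ldots)$ as a separating point in one stroke. To repair your proposal you should replace the ``adaptation'' of Lemma~\ref{lem-2endpoints} by this decomposition (or else actually carry out and justify the adapted case analysis, including the point $(d,d,\ldots)$ and points whose coordinates accumulate on $d$).
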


\begin{proof}
Suppose $c$ and $e$ are fixed points. Let $p = \max(f([c, d])$ and $q = \min(f([d, e])$. By 
Lemma \ref{lem-no-extreme-pt}, $c < q$ and $p < e$. Then the functions $f \uhr [c, p]$ and $f \uhr [q, e]$ 
satisfy the conditions of Proposition \ref{lem-endpoints} case (1). Each function  has exactly two fixed 
points and so by Lemma \ref{lem-2endpoints}, each of the sets $A_1 := \L([c, d], f \uhr [c, d])$ and 
$A_2 := \L([d, e], f \uhr [d, e])$ is an arc, and by Lemma \ref{lem-claim1} (iv), 
$A_1 \cap A_2 = \{( d, d, \ldots )\}$.

Suppose $\Bold x \in \L([c, e], f \uhr [c, e]) \setminus \{ (d, d, \ldots ) \}$. If $x_0 \in [c, q)$, then for 
each $n \in \N$, $x_n \in [c, q)$. Hence $\Bold x \in A_1$, and similarly if $x_0 \in (p, e]$ then 
$\Bold x \in A_2$. Suppose $x_0 \in [q, p]$. Since $\Bold x \ne (d, d, \ldots )$ there exists $n \in \N$ such 
that $x_n \not\in [q, p]$. Let $m = \min \{ n \in \N : x_n \not\in [q, p] \}$. If $x_m \in [c, q)$, then   
$x_n \in [c, q)$ for each $n > m$, and hence $\Bold x \in A_1$. Otherwise $\Bold x \in A_2$.

Thus $\L([c, e], f \uhr [c, e]) = A_1 \cup A_2$ and $(d, d, \ldots )$ is a separating point of  
$\L([c, e], f \uhr [c, e])$ and hence of $\Lf$.

If $e$ is not a fixed point, then $e = 1$, and by the surjectivity of $f$ and Lemma \ref{lem-no-extreme-pt}, 
$f \uhr [c,1]$ satisfies the condition of Proposition \ref{lem-endpoints} case (1). Since $d$ is an N-type fixed 
point, if $c \ne 0$, $c$ is either an S-type or an M-type fixed point, or an accumulation point of the set of 
fixed points. Hence by Corollary \ref{cor-fixed-points} and Lemma \ref{S-fixed},
$$\Lf = \L([0, c], f \uhr [0, c]) \cup \L([c, 1], f \uhr [c, 1]),$$ 
$$\L([0, c], f \uhr [0, c]) \cap \L([c, 1], f \uhr [c, 1]) = \{ (c, c, \ldots ) \},$$
and $\L([c, 1], f \uhr [c, 1])$ is an arc since $f \uhr [c, 1]$ admits exactly two fixed points.

Similarly if $c$ is not a fixed point.
\end{proof}

\begin{lemma}\label{lem-arc}
If $f : [0, 1] \to [0, 1]$ is a continuous surjective function that does not admit a splitting sequence, then 
$\Lf$ is an arc.
\end{lemma}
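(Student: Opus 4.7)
The plan is to decompose $\Lf$ into arcs by iteratively splitting at separating points provided by Lemmas \ref{S-fixed} and \ref{N-fixed} and Corollary \ref{cor-fixed-points}, reducing to the base case of two fixed points (Lemma \ref{lem-2endpoints}). First I reduce to the case where $f$ has only fixed points. By Lemma \ref{lem-2cyclesonly}, $f$ admits no periodic points of period greater than $2$. If $f$ admits a $2$-cycle, set $g = f^2$: by Lemma \ref{lem-f-2}, $g$ does not admit a splitting sequence, and any $2$-cycle of $g$ would be a $4$-cycle of $f$ and so cannot exist. Thus $g$ has only fixed points, and the map $h : \Lf \to \Lg$ defined by $h(\Bold x) = (x_{2n})_{n \in \N}$ (noted in the proof of Proposition \ref{lem-endpoints} to be a homeomorphism) transfers the arc conclusion from $g$ to $f$. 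Henceforth assume $f$ has only fixed points, with fixed-point set $F$.

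The cases $|F| = 1$ (where $\Lf$ is a single point by Proposition \ref{BS-prop}) and $|F| = 2$ (where $\Lf$ is an arc by Lemma \ref{lem-2endpoints}) are immediate. For $|F| \ge 3$, pick $d \in F$ with $\min F < d < \max F$. If $d$ is an accumulation point of $F$, Corollary \ref{cor-fixed-points} gives
$$
\Lf = \L([0, d], f \uhr [0, d]) \cup \L([d, 1], f \uhr [d, 1])
$$
with the two pieces meeting only at $(d, d, \ldots)$; Lemma \ref{S-fixed} gives the same decomposition when $d$ is isolated of S-, M-, or W-type. If instead $d$ is isolated of N-type witnessed by $(c, e)$, Lemma \ref{N-fixed} shows $\L([c, e], f \uhr [c, e])$ is an arc; moreover, any neighboring fixed point must be S-type, M-type, or an accumulation point of $F$, so I apply Lemma \ref{S-fixed} or Corollary \ref{cor-fixed-points} at $c$ and at $e$ (where distinct from $0$ and $1$) to get $\Lf = \L([0, c], f \uhr [0, c]) \cup \L([c, e], f \uhr [c, e]) \cup \L([e, 1], f \uhr [e, 1])$ with adjacent pieces meeting only at single diagonal points. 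In every case the relevant restrictions are continuous surjections of their intervals onto themselves, and they inherit the no-splitting-sequence condition, since any splitting sequence for a restriction is one for $f$. Since a finite union of arcs joined consecutively at common endpoints is an arc, recursively applying the lemma to each piece finishes this case.

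The main obstacle is that $F$ may be infinite, in which case a naive induction on $|F|$ does not terminate. My plan for that case is to argue instead that $\Lf$ has exactly two non-separating points and conclude via the classical characterization of arcs as nondegenerate continua with exactly two non-separating points. Proposition \ref{lem-endpoints} supplies two non-separating points, and the decomposition lemmas above witness that every diagonal point $(d, d, \ldots)$ with $d \in F \setminus \{\min F, \max F\}$ is a separating point of $\Lf$. The remaining task -- showing that every $\Bold p \in \Lf$ which is neither one of the two endpoints nor of diagonal form is also separating -- I handle in the spirit of case (b) in the proof of Lemma \ref{lem-2endpoints}: I construct a tight sequence generated by $\Bold p$ and a small interval around $p_m$, and use the no-splitting-sequence hypothesis (invoking Lemma \ref{lem:p-not-in-E} to keep $p_n$ out of the boundary of the generated intervals) to trap the $f$-preimages of $p_n$ inside the generated intervals for all large $n$, thereby producing an explicit separation of $\Lf$ at $\Bold p$.
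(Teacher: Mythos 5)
Your reduction to the fixed-points-only case via $f^2$ and your handling of finite $F$ by decomposing at interior fixed points (using Corollary \ref{cor-fixed-points}, Lemma \ref{S-fixed}, and Lemma \ref{N-fixed}, then gluing arcs at common endpoints) is essentially sound and close in spirit to the paper. But the infinite-$F$ case is where the real content of this lemma lies, and there your proposal has a genuine gap. You correctly observe that the induction on $|F|$ does not terminate, but your fallback --- redoing the tight-sequence argument of case (b) of Lemma \ref{lem-2endpoints} directly in $\Lf$ --- is only a plan, and it is not clear it goes through: that argument leans on the two-fixed-point structure in several places (the coordinates of $\Bold p$ eventually staying below $b$, the choice of $\ep$ and $i$ avoiding the finitely many distinguished points $0,d,1$, and the verification that the sets $A$ and $B$ are disjoint and cover $\Lf\setminus\{\Bold p\}$). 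Reproving all of this for a map with infinitely many fixed points is a substantial piece of work that you have not carried out, not a routine adaptation.

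The idea you are missing is the paper's localization step, which handles finite and infinite $F$ uniformly and avoids ever revisiting the tight-sequence argument. For a non-diagonal, non-endpoint $\Bold x$, Lemmas \ref{lem-claim1}(iv) and \ref{lem-claim2}(iv) give an $n$ with $\min(F) < x_n < \max(F)$, and since $F$ is closed and $x_n\notin F$ there are \emph{adjacent} fixed points $c < x_n < c'$ with $(c,c')\cap F=\emptyset$. At most one of $c,c'$ can be N-type (both being N-type would force $f(x)\le x$ and $f(x)\ge x$ on $[c,c']$), so one can split $\Lf$ at a non-N-type one of them (and then once more at the other end, as in the paper's cases (a)--(c)), landing $\Bold x$ in a piece $\L([c,c'],f\uhr[c,c'])$ whose bonding map has exactly two fixed points. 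Lemma \ref{lem-2endpoints} then says that piece is an arc and $\Bold x$ separates it, hence separates $\Lf$. Combined with Proposition \ref{lem-endpoints} this shows $\Lf$ has exactly two nonseparating points, which is the definition of an arc used in the paper. I recommend you replace your infinite-$F$ sketch with this localization argument; it also subsumes your finite case and removes the need for the gluing step.
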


\begin{proof}
Since $\Lf$ is an arc if and only if $\Lf^2$ is an arc, and if $f$ satisfies the condition of case (2) of the 
proof of Proposition \ref{lem-endpoints}, then $f^2$ satisfies the condition of case (1), we can assume that $f$ 
satisfies the condition of case (1) and hence admits more than one fixed point. 

Let $E$ be the set containing the 2 endpoints admitted by $f$ as in Proposition \ref{lem-endpoints}, and let 
$d$ and $e$, $d < e$, be the two fixed points that determine the members of $E$. It remains to show that if 
$\Bold x \in \Lf \setminus E$, then $\Bold x$ is a separating point. So let $\Bold x \in \Lf \setminus E$.

By Lemma \ref{lem-2endpoints} we can assume that $f$ admits more than two fixed points. Let $F$ be the set of 
fixed points admitted by $f$ and let $$\Bold F = \{ (p, p, \ldots ) : p \in F \}.$$ If $\Bold x \in \Bold F$, 
then by Lemma \ref{lem-no-extreme-pt}, Corollary \ref{cor-fixed-points}, Proposition \ref{lem-endpoints}, and 
Lemmas \ref{S-fixed} and \ref{N-fixed}, $\Bold x$ is a separating point of $\Lf$.

Suppose $\Bold x \not\in \Bold F$. By Lemma \ref{lem-claim1} (iv) and Lemma \ref{lem-claim2} (iv), there exists 
$n \in \N$ such that $\min(F) < x_n < \max(F)$. Then there are fixed points $c, c'$ such that $c < x_n < c'$ and 
$(c, c') \cap F = \E$. We consider three cases.

\begin{enumerate}
\item[(a)] $c = 0$ or $c' = 1$.  
\end{enumerate}
Suppose $c' = 1$. If $c$ is an S-type, M-type or W-type fixed point, or an accumulation point of $F$, then 
$$\Lf = \L([0, c], f \uhr [0, c]) \cup \L([c, 1], f \uhr [c, 1])$$ and $\Bold x \in \L([c, 1], f \uhr [c, 1])$. 
Since $\L([c, 1], f \uhr [c, 1])$ has exactly two fixed points and $\Bold x$ is not one of them, 
$\L([c, 1], f \uhr [c, 1])$ is an arc and $\Bold x$ is a separating point of $\L([c, 1], f \uhr [c, 1])$ and 
hence of $\Lf$.

If $c$ is an N-type fixed point, witnessed by $(e, c')$, then $e$ is a fixed point and $e$ is not an N-type fixed 
point, so $$\Lf = \L([0, e], f \uhr [0, e]) \cup \L([e, 1], f \uhr [e, 1])$$
$\Bold x \in \L([e, 1], f \uhr [e, 1])$, $\L([e, 1], f \uhr [e, 1])$ is an arc and $\Bold x$ is not an endpoint 
of $\L([e, 1], f \uhr [e, 1])$. So again, $\Bold x$ is a separating point of $\Lf$.

Similarly if $c=0$.

\begin{enumerate}
\item[(b)] $c = \min(F) \ne 0$, or $c' = \max(F) \ne 1$.
\end{enumerate}
Suppose $c' = \max(F) \ne 1$. Then $c'$ is an N-type fixed point and $c$ is either an S-type or an M-type fixed point, 
or an accumulation point of $F$. In any case
$$\Lf = \L([0, c], f \uhr [0, c]) \cup \L([c, 1], f \uhr [c, 1]),$$
$$\L([0, c], f \uhr [0, c]) \cap \L([c, 1], f \uhr [c, 1]) = \{ (c, c, \ldots ) \},$$
$\Bold x \in \L([c, 1], f \uhr [c, 1])$, and by Lemma \ref{lem-2endpoints}, $\L([c, 1], f \uhr [c, 1])$ is an arc. 
Thus $\Bold x$ is a separating point of $\Lf$. 

Similarly if $c = \min(F) \ne 0$.

\begin{enumerate}
\item[(c)] $c \ne 0$, $c' \ne 1$, $c \ne \min(F)$, $c' \ne \max(F)$.
\end{enumerate}
Either $c$ or $c'$ is not an N-type fixed point. Suppose $c'$ is not. Then 
$$\Lf = \L([0, c'], f \uhr [0, c']) \cup \L([c', 1], f \uhr [c', 1]),$$
$\Bold x \in \L([0, c'], f \uhr [0, c'])$, $c'$ is the maximum fixed point admitted by $f \uhr [0, c']$, and so 
the result follows as in case (b) above.

Similarly if $c$ is not an N-type fixed point
\end{proof}

We now show that if $f$ does admit a splitting sequence then $\Lf$ is not an arc.

\begin{lemma}\label{lem-splitting}
Let $f : [0, 1] \to [0, 1]$ be a surjective continuous function. If $f$ admits a splitting sequence  
then there is a nondegenerate continuum $C \subset \Lf$ and a sequence of nondegenerate continua 
$$\langle C_n \subset \Lf : n \in \N \rangle ,$$ $C_n \ne C$, such that $C_n \to C$ in the Hausdorff metric.
\end{lemma}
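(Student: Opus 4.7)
The natural candidate is $C := L(\si)$, the subcontinuum of $\Lf$ determined by the splitting sequence $\si = \langle T_n : n\in\N\rangle$. Since $T_n$ is nondegenerate for every $n > m$ and $f(T_{n+1}) = T_n$, a straightforward forward/backward lifting argument shows $\pi_n(C) = T_n$, so $C$ is a nondegenerate continuum.

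For each $n \in N$ I would construct a ``diverted'' sequence $\si^{(n)}$ whose terms agree with $\si$ for indices $i < n$, take $\si^{(n)}(n) = S_n$, and for each $i > n$ let $\si^{(n)}(i)$ be some nondegenerate closed subinterval of $f^{-1}(\si^{(n)}(i-1))$ mapping onto $\si^{(n)}(i-1)$ under $f$; such a subinterval exists because any preimage component of a nondegenerate interval is itself a nondegenerate interval mapping onto it. The identity $f(S_n) = f(T_n) = T_{n-1}$ ensures that each $\Bold x \in \L(\si^{(n)}(i), f \uhr \si^{(n)}(i))$ belongs to $\Lf$. Define $C_n := \L(\si^{(n)}(i), f \uhr \si^{(n)}(i))$. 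Then $C_n$ is a nondegenerate continuum in $\Lf$, and since $\pi_n(C_n) = S_n$ is an interval but $\pi_n(C\cap C_n) \se S_n \cap T_n \se \{l_n, r_n\}$ has at most two points, we get $C_n \ne C$.

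It remains to verify $C_n \to C$ in the Hausdorff metric, for which I would use the standard product metric $d(\Bold x, \Bold y) = \sum_{i=0}^\infty |x_i - y_i|/2^i$. Given $\Bold x \in C$, I pick $\Bold y \in C_n$ by setting $y_i = x_i$ for $i < n$, then lifting $x_{n-1} \in T_{n-1} = f(S_n)$ to some $y_n \in S_n$, and recursively lifting to $y_i \in \si^{(n)}(i)$ for $i > n$; the first $n$ coordinates agree, so $d(\Bold x, \Bold y) \le \sum_{i\ge n} 2^{-i} = 2^{1-n}$. The symmetric direction is identical (given $\Bold y \in C_n$, setting $x_i = y_i$ for $i < n$ keeps us in $T_i$ since $f^{n-i}(y_n) = y_i$ and $y_n \in S_n$, so $y_i \in f^{n-i}(S_n) \se T_i$; then lift forward through the sequence $\si$). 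Hence $d_H(C, C_n) \le 2^{1-n}$, and after enumerating $N = \{n_0 < n_1 < \cdots\}$ the sequence $\langle C_{n_k} : k \in \N\rangle$ delivers the required convergence. The main obstacle is purely bookkeeping: one must set up $\si^{(n)}$ so that $L(\si^{(n)}) \se \Lf$ and simultaneously so that the coordinates below $n$ are forced to lie in the original intervals $T_i$. Both of these come from the single defining compatibility $f(S_n) = f(T_n)$ of a splitting sequence, after which the exponentially decaying metric makes the Hausdorff convergence automatic.
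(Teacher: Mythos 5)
Your proposal is correct and is essentially the paper's own argument: take $C = L(\sigma)$, and for each $n \in N$ build $C_n$ by rerouting the sequence through the witness $S_n$ at coordinate $n$ while keeping $T_i$ for $i < n$, so that points of $C_n$ and $C$ can be matched on the first $n$ coordinates (forcing Hausdorff convergence under the product metric) while the projections at coordinate $n$ are nondegenerate intervals meeting in at most their endpoints (forcing $C_n \ne C$). One small repair: the existence of the pullback intervals $\sigma^{(n)}(i)$ for $i > n$ should be justified by the standard covering lemma (surjectivity of $f$ gives a closed subinterval $K$ of $[0,1]$ with $f(K)$ equal to the target interval), not by the claim that every component of $f^{-1}(J)$ is a nondegenerate interval mapping onto $J$, which is false in general.
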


\begin{proof}
Suppose $\si = \ls T_n = [l_n, r_n] : n \in \N \rs$ is a splitting sequence and let $N$ be an infinite subset 
of $\N$ such that for each $n \in N$ there is a nondegenerate interval $S_n \subset [0, 1]$, 
$S_n \cap T_n \subset\{l_n,r_n\}$ and $f(S_n) = f(T_n)$.

For each $n \in N$ let $S_n^n = T_n$. If $j \ge n$ and $S_n^j \subset [0, 1]$ has been defined, choose an 
interval $S_n^{j+1} \subset [0, 1]$ such that $f(S_n^{j+1}) = S_n^j$. Since $f$ is surjective, $S^{j+1}_n$ 
exists as $\Gamma(f) \cap ([0,1] \times S_n^j)$ must have a component $C$ such that $\pi_j(C) = S_n^j$, and 
so we can let $S^{j+1}_n = \pi_{j+1}(C)$.

For each $j < n$ let $S_n^j = T_j$. It follows that $S_n^j$ is nondegenerate for each $n \in N$, $j \le n$.

For each $n \in N$ let $S^n = \L(S_n^m, f \upharpoonright S_n^m)$. Then $\{ L(\si) \} \cup \{ S^n : n \in \N \}$ 
is a collection of nondegenerate continua in $\Lf$. If $\Bold t \in L(\si)$ then for each $n \in \N$ there is 
a point $\Bold s^n \in S^n$ such that $s_j^n = t_j$ for every $j \le n$ and hence any neighbourhood of $\Bold t$ 
meets infinitely many sets $S^n$. Furthermore, any sequence $\{ \Bold s^n \in S^n : n \in \N \}$ has a limit point 
in $L(\si)$. It follows that $S^n \to L(\si)$ in the Hausdorff metric.
\end{proof}

\begin{theorem}\label{thm-main}
Suppose $f : [0, 1] \to [0, 1]$ is a continuous surjective function. Then $\Lf$ is an arc if and only if 
$\Lf$ does not admit a splitting sequence.
\end{theorem}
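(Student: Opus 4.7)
The theorem is the conjunction of two implications, and my plan is to prove it by combining Lemmas \ref{lem-arc} and \ref{lem-splitting}, with a small additional argument for one of the directions.

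For the direction ``no splitting sequence $\Rightarrow$ $\Lf$ is an arc,'' I would simply cite Lemma \ref{lem-arc}, which already establishes precisely this implication.

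For the direction ``$\Lf$ is an arc $\Rightarrow$ $f$ does not admit a splitting sequence,'' I would argue the contrapositive. Suppose $f$ admits a splitting sequence $\si = \ls T_n = [l_n,r_n] : n \in \N \rs$ witnessed by $\{S_n : n \in N\}$. Lemma \ref{lem-splitting} provides a nondegenerate continuum $C \subset \Lf$ and a sequence of nondegenerate continua $C_n \subset \Lf$, with $C_n \ne C$ and $C_n \to C$ in the Hausdorff metric. The goal is to show this configuration is incompatible with $\Lf$ being an arc. The plan is to sharpen the conclusion of Lemma \ref{lem-splitting} by inspecting its construction, where $C_n = S^n$ is built from the witness interval $S_n$ satisfying $S_n \cap T_n \subseteq \{l_n,r_n\}$. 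This near-disjointness of witnesses from $T_n$ at coordinate $n$ forces $S^n \cap L(\si)$ to project to at most two points at coordinate $n$, so the family $\{C_n\}$ is essentially disjoint from $C$. Then, in an arc $\Lf \cong [0,1]$, any nondegenerate subarcs $C_n = [a_n,b_n]$ that are disjoint from a fixed nondegenerate subarc $C = [a,b]$ must lie in $[0,a]$ or $[b,1]$, and hence cannot Hausdorff-converge to $C$ because one endpoint of each $C_n$ is pinned away from one endpoint of $C$. The resulting contradiction shows $\Lf$ is not an arc, completing the argument.

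The main obstacle is the gap between the literal conclusion of Lemma \ref{lem-splitting} and the separation property actually needed: mere Hausdorff convergence of distinct nondegenerate subcontinua is perfectly consistent with arc structure (take $[0,\tfrac12+\tfrac1n] \to [0,\tfrac12]$ inside $[0,1]$). Bridging this gap requires extracting, from the splitting-sequence witnesses, a uniform-in-$n$ constraint on $C_n \cap C$ inside $\Lf$ that is strong enough to rule out arc-style convergence. Once this separation is in hand, the arc-theoretic argument above closes the proof cleanly.
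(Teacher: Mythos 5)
Your architecture coincides with the paper's --- the forward direction is Lemma \ref{lem-arc}, and the reverse direction is to come from Lemma \ref{lem-splitting} --- and your observation that the literal conclusion of Lemma \ref{lem-splitting} is too weak is correct: nondegenerate subcontinua $C_n\ne C$ with $C_n\to C$ in the Hausdorff metric exist inside $[0,1]$ itself (your example $[0,\frac12+\frac1n]\to[0,\frac12]$), so the reverse implication genuinely requires extra information from the construction. The gap is in your bridge. From ``$S^n\cap L(\si)$ projects into $\{l_n,r_n\}$ at coordinate $n$'' you infer that the $C_n$ are ``essentially disjoint'' from $C$ and then treat them as disjoint subarcs that cannot converge to $C$. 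That inference is unjustified and is where the argument would fail: a subcontinuum of $\Lf$ whose $n$-th projection is a single point can still be nondegenerate (its coordinates beyond $n$ may sweep out a nondegenerate set), so $S^n\cap L(\si)$ need not be finite, let alone empty. You flag this yourself as the main obstacle and do not close it, so as written the direction ``arc $\Rightarrow$ no splitting sequence'' is not proved.

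Here is one way to close it using only what the construction in Lemma \ref{lem-splitting} provides. Suppose $\Lf$ is an arc; then $C=L(\si)$ and each $C_n=S^n$ (for $n$ in the infinite index set $N$ of the witnesses) is a subarc, and identifying $\Lf$ with $[0,1]$ we may write $C=[a,b]$ with $a<b$ and $C_n=[a_n,b_n]$ with $a_n\to a$, $b_n\to b$. Consequently every point $\Bold z$ of $C$ other than its two endpoints lies in $C_n\cap C$ for all sufficiently large $n\in N$, which forces $z_n\in S_n\cap T_n\se\{l_n,r_n\}$ for all such $n$. Now take three distinct non-endpoints $\Bold z^1,\Bold z^2,\Bold z^3$ of $C$: for each large $n\in N$ two of the three $n$-th coordinates coincide, so by pigeonhole some fixed pair $i\ne j$ satisfies $z^i_n=z^j_n$ for infinitely many $n$, and applying $f^{n-m}$ gives $z^i_m=z^j_m$ for every $m$, whence $\Bold z^i=\Bold z^j$ --- a contradiction, since a nondegenerate arc has infinitely many non-endpoints. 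For what it is worth, the paper's own proof of the theorem is the single sentence ``By Lemmas \ref{lem-arc} and \ref{lem-splitting}'' and suppresses exactly this step, so your diagnosis of where the real work lies is accurate even though your proposal does not yet supply it.
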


\begin{proof} 
By Lemmas \ref{lem-arc} and \ref{lem-splitting}.
\end{proof}

\end{document}